\@date \else {\vskip3ex \centering\footnotesize\@date\par\vskip1ex}\fi
\else \@footnotetext{\@setdate}\fi}
\begin{document}

\newtheorem{theorem}{Theorem}[section]
\newtheorem{lemma}[theorem]{Lemma}
\newtheorem{definition}[theorem]{Definition}
\newtheorem{conjecture}[theorem]{Conjecture}
\newtheorem{proposition}[theorem]{Proposition}
\newtheorem{claim}[theorem]{Claim}
\newtheorem{corollary}[theorem]{Corollary}
\newtheorem{observation}[theorem]{Observation}
\newtheorem{problem}[theorem]{Open Problem}

\newtheorem*{ineq}{\textbf{Inequalities}}

\newtheorem*{c0}{\textbf{Markov Bound}}
\newtheorem*{c1}{\textbf{Chernoff's Bound}}
\newtheorem*{c2}{\textbf{First Moment Method}}
\newtheorem*{c3}{\textbf{Bernstein's Bound}}
\newtheorem*{c4}{\textbf{Second Moment Method}}
\newtheorem*{c5}{\textbf{Method of Moments}}
\newtheorem*{ub}{\textbf{Union Bound}}
\newtheorem*{bc}{\textbf{Binomial Coefficients Approximation}}
\newtheorem*{123conj}{\textbf{1-2-3 Conjecture}}

\theoremstyle{remark}
\newtheorem{rem}[theorem]{Remark}

\newcommand{\weighted}[1]{\mbox{2-WEIGHTED}^{{(#1)}}}
\newcommand{\Whp}{{\textit{W.h.p. }}}
\newcommand{\whp}{{\textit{w.h.p. }}}
\newcommand{\bin}{\textrm{Bin}}
\newcommand{\po}{\textrm{Po}}
\newcommand{\codeg}{\textrm{codeg}}
\newcommand{\E}{\mathrm{E}}
\newcommand{\V}{\mathrm{Var}}
\newcommand{\RT}{\textup{\textbf{RT}}}
\newcommand{\f}{\textup{\textbf{f}}}
\newcommand{\z}{\textrm{\textbf{z}}}
\newcommand{\ex}{\textrm{\textbf{ex}}}
\newcommand{\sqbs}[1]{\left[ #1 \right]}
\newcommand{\of}[1]{\left( #1 \right)}
\newcommand{\bfrac}[2]{\of{\frac{#1}{#2}}}
\renewcommand{\l}{\ell}
\newcommand{\sm}{\setminus}
\newcommand{\mc}[1]{\mathcal{#1}}
\newcommand{\h}[1]{\mathbb{H}^{({#1})}}
\newcommand{\hh}{\mathbb{H}}
\newcommand{\g}{\mathbb{G}}

\title[]{Weak and strong versions of the 1-2-3 conjecture  for uniform hypergraphs}

\author{Patrick Bennett, Andrzej Dudek, Alan Frieze, Laars Helenius}
\address[Andrzej Dudek, Patrick Bennett and Laars Helenius]{Department of Mathematics, Western Michigan University, Kalamazoo, MI}
\email{\tt \{patrick.bennett,\;andrzej.dudek,\;laars.c.helenius\}@wmich.edu}
\address[Alan Frieze]{Department of Mathematical Sciences, Carnegie Mellon University, Pittsburgh, PA}
\email{\tt alan@random.math.cmu.edu}
\thanks{The second author was supported in part by Simons Foundation Grant \#244712 and by the National Security Agency under Grant Number H98230-15-1-0172. The United States Government is authorized to reproduce and distribute reprints notwithstanding any copyright notation hereon. The third author was supported in part by NSF grant DMS1362785.}
%\keywords{TODO}
%\subjclass{TODO}

\begin{abstract}
Given an $r$-uniform hypergraph $H=(V,E)$ and a weight function $\omega:E\to\{1,\dots,w\}$, a coloring of vertices of~$H$, induced by~$\omega$, is defined by $c(v) = \sum_{e\ni v} w(e)$ for all $v\in V$. If there exists such a coloring that is strong (that means in each edge no color appears more than once), then we say that $H$ is strongly $w$-weighted. Similarly, if the coloring is weak (that means there is no monochromatic edge), then we say that $H$ is weakly $w$-weighted. In this paper, we show that almost all 3 or 4-uniform hypergraphs are strongly 2-weighted (but not 1-weighted) and almost all $5$-uniform hypergraphs are either 1 or 2 strongly weighted (with a nontrivial distribution). Furthermore, for $r\ge 6$ we show that almost all $r$-uniform hypergraphs are strongly 1-weighted. We complement these results by showing that almost all 3-uniform hypergraphs are weakly 2-weighted but not 1-weighted and for $r\ge 4$ almost all $r$-uniform hypergraphs are weakly 1-weighted.
These results extend a previous work of Addario-Berry, Dalal and Reed for graphs. We also prove general lower bounds and show that there are $r$-uniform hypergraphs which are not strongly $(r^2-r)$-weighted and not weakly 2-weighted. Finally, we show that determining whether a particular uniform hypergraph is strongly 2-weighted is NP-complete.
\end{abstract}

\date{\today}

\maketitle

\section{Introduction}
Let $H=(V,E)$ be a hypergraph on the vertex set~$V$ and with the set of (hyper)edges~$E\subseteq 2^V$. In this paper we consider \emph{$r$-uniform hypergraphs}, meaning that each edge has size $r$. Let $\omega:E\to\{1,\dots,w\}$ be a \emph{weight function}. Furthermore, let $c:V\to \mathbb{N}$ be a vertex-coloring induced by $\omega$ defined as $c(v) = \sum_{v \ni e} \omega(e)$
for each $v\in V$. The vertex-coloring is \emph{weak} if there is no monochromatic edge and \emph{strong} if each edge is rainbow, i.e., for each $\{v_1,\dots,v_r\}\in E$ we have $c(v_i)\neq c(v_j)$ for every $1\le i<j\le r$. We say that $H$ is \emph{weakly} \emph{$w$-weighted} if there exists $\omega:E\to\{1,\dots,w\}$ such that the vertex-coloring~$c$ induced by $\omega$ is weak. Similarly, we say that $H$ is \emph{strongly} \emph{$w$-weighted} if the corresponding coloring is strong.
Clearly each strongly $w$-weighted hypergraph is also weakly $w$-weighted. Note that for graphs ($r=2$) weak and strong colorings (and therefore weightings) are the same.

The well-known 1-2-3 Conjecture of Karo\'nski, \L{}uczak and Thomason~\cite{KLT} asserts that every graph without isolated edges is 3-weighted.
This conjecture attracted a lot of attention and has been studied by several researchers (see, e.g.,  a survey paper of Seamone~\cite{Seamone}). 
The 1-2-3 conjecture is still open but it is known due to a result of Kalkowski, Karo\'nski, and Pfender~\cite{5enough} that every graph without isolated edges is 5-weighted. (For previous results see  \cite{30enough, Gnp, 13enough}).

In this paper we study an analogue of this conjecture for hypergraphs. This stream of research was initiated by Kalkowski, Karo\'nski, and Pfender~\cite{KKP} who studied weakly weighted hypergraphs. In particular, they proved that any $r$-uniform hypergraph without isolated edges is weakly $(5r-5)$-weighted and that 3-uniform hypergraphs are even weakly 5-weighted. The authors also asked whether there is an absolute constant~$w_0$ such that every $r$-uniform hypergraph is weakly $w_0$-weighted. Furthermore, they conjectured that each 3-uniform hypergraph without isolated edges is weakly 3-weighted. We show that for almost all uniform hypergraphs these conjectures hold. 

In this paper we are interested in both strongly and weakly weighted hypergraphs. 
We say that a hypergraph is \emph{nice} if there is no pair of vertices $u$ and $v$ such that the set of edges containing $u$ is the same as the set of edges containing $v$. Note that only nice hypergraphs can be strongly weighted.

First we provide a lower bound on $w$ assuming that $r$-uniform hypergraph is strongly (or weakly) $w$-weighted.

\begin{theorem}\label{thm:lb}\ \\[-15pt]
\begin{enumerate}[label=(\roman*)]
\item\label{thm:lb:a} Let $r\ge 3$ be such that $r-1$ is prime power. Then, there exists a nice $r$-uniform hypergraph that is not strongly $(r^2-r)$-weighted. Furthermore, for all large $r$, there exists a nice $r$-uniform hypergraph that is not strongly $(r^2-o(r^2))$-weighted. 
\item\label{thm:lb:b} Let $r\ge 3$. Then, there exists an $r$-uniform hypergraph that is not weakly 2-weighted. 
\end{enumerate}
\end{theorem}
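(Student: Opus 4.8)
The plan is to handle the two parts by dual design-theoretic constructions, in each case reducing the weighting question to a purely combinatorial colouring condition on an underlying incidence structure.

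For part (i), since $r-1$ is a prime power there is a projective plane $PG(2,r-1)$, with $N:=r^2-r+1$ points and $N$ lines, every line carrying exactly $r$ points, every point lying on exactly $r$ lines, and every two points collinear. I would take as vertex set the \emph{flags} $(p,\ell)$ (incident point--line pairs), and as edges both the \emph{point-edges} $L_p=\{(p,\ell):\ell\ni p\}$ and the \emph{line-edges} $Z_\ell=\{(p,\ell):p\in\ell\}$; each has size $r$, so the hypergraph is $r$-uniform, and since the pair $\{L_p,Z_\ell\}$ determines the flag $(p,\ell)$, it is nice. For a weighting $\omega$ one has $c((p,\ell))=\omega(L_p)+\omega(Z_\ell)$, so a line-edge $Z_\ell$ can be rainbow only if the values $\omega(L_p)$, $p\in\ell$, are pairwise distinct. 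As every two points are collinear, a strong weighting forces $\omega(L_\cdot)$ to be injective on all $N$ points, whence $w\ge N=r^2-r+1>r^2-r$. This gives the first assertion.

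For the ``furthermore'' I would drop the prime-power hypothesis by choosing $q$ to be the largest prime power with $q\le r-1$; by the known density of prime powers one has $q=r-1-o(r)$, so $PG(2,q)$ has $q^2+q+1=r^2-o(r^2)$ points. Running the same construction produces edges of size $q+1<r$, which I would pad up to size $r$ by attaching to each a few fresh vertices, each equipped with its own private pendant $r$-edge so that the padded vertices can still be coloured distinctly and the hypergraph stays nice. Padding only adds constraints, so the injectivity of $\omega(L_\cdot)$ on the $q^2+q+1$ points survives and forces $w\ge r^2-o(r^2)$. The point to verify carefully here is that the padding does not accidentally make the hypergraph impossible to strongly weight for a vacuous, non-nice reason; routing each padding vertex through a private edge is what I expect to handle this cleanly.

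For part (ii) the target is to force a monochromatic edge under every weighting into $\{1,2\}$. I would build a ``frame'' from the $6$-cycle on labels $e_1,\dots,e_6$: for each cyclic pair $\{i,i+1\}$ introduce a bundle $R_{i,i+1}$ of vertices each lying in exactly the two edges $e_i,e_{i+1}$, with bundle sizes chosen alternately $\alpha,\beta$ around the cycle, where $\alpha+\beta=r$ and $\alpha,\beta\ge1$ (possible for every $r\ge 3$, which is exactly why one uses an even cycle rather than a triangle). Then $|e_i|=\alpha+\beta=r$, adjacent edges overlap so there is no isolated edge, and a vertex of $R_{i,i+1}$ has colour $\omega(e_i)+\omega(e_{i+1})$; hence $e_i$ is monochromatic precisely when $\omega(e_{i-1})=\omega(e_{i+1})$. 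Avoiding all monochromatic edges therefore amounts to $2$-colouring $\{e_1,\dots,e_6\}$ so that no distance-two pair $\{e_j,e_{j+2}\}$ is monochromatic; but the distance-two graph on $\mathbb{Z}_6$ is the disjoint union of the two triangles $\{e_1,e_3,e_5\}$ and $\{e_2,e_4,e_6\}$, and a triangle has no proper $2$-colouring. Thus every weighting leaves some $e_i$ monochromatic, so the hypergraph is not weakly $2$-weighted.

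The step I expect to be most delicate is the interface between uniformity and the parity of $r$. A triangle frame already works when $r$ is even but fails for odd $r$, because the bundle-size system around a triangle forces all sizes equal; passing to $C_6$, the smallest even cycle whose distance-two graph still contains an odd cycle, is precisely what makes the construction $r$-uniform for every $r\ge 3$. In both parts, then, the real work is not the weighting argument itself but arranging an incidence frame whose edges can be made to have size exactly $r$ while preserving the crucial collinearity (for part (i)) or odd-cycle (for part (ii)) structure.
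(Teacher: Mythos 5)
Most of your proposal coincides with the paper's own proof. For the prime-power case of part (i), the paper uses exactly your flag construction (its point-edges $e(p)$ and line-edges $f(\ell)$ are your $L_p$ and $Z_\ell$) and the same counting argument: with only $r^2-r$ available weights, two point-edges receive the same weight, and the line-edge through those two points fails to be rainbow. For part (ii), the paper's hypergraph --- $e_1=\{x_1,\dots,x_r\}$, $e_2=\{y_1,\dots,y_r\}$, $e_3=\{z_1,\dots,z_r\}$, $f_1=\{x_2,\dots,x_r,y_1\}$, $f_2=\{y_2,\dots,y_r,z_1\}$, $f_3=\{z_2,\dots,z_r,x_1\}$ --- is precisely your $C_6$ frame with alternating bundle sizes $r-1$ and $1$ (the cycle order being $e_1,f_1,e_2,f_2,e_3,f_3$), and its pigeonhole among $\omega(e_1),\omega(e_2),\omega(e_3)$ is your ``no proper $2$-coloring of a triangle'' argument in disguise. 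These portions are correct and essentially identical to the paper.

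The genuine gap is in the ``furthermore'' clause of part (i). Your overall plan (largest prime power $q\le r-1$, then pad the $(q+1)$-uniform construction up to uniformity $r$) is the paper's as well, but your padding gadget destroys niceness. A ``private pendant $r$-edge'' attached to a padding vertex $v$ must contain $r-1$ further vertices; if these are fresh, each of them lies in exactly one edge, namely that pendant edge, so any two of them have identical edge sets --- and since $r-1\ge 2$, such a pair exists inside every pendant edge. The padded hypergraph is therefore not nice (and hence vacuously not strongly weightable for any $w$), which is exactly the degenerate situation the theorem's niceness hypothesis is meant to exclude: your gadget reproduces the defect one level down instead of eliminating it. Filling the pendant edges with existing flags instead does not obviously help, because a flag lying in a third edge no longer satisfies $c((p,\ell))=\omega(L_p)+\omega(Z_\ell)$, the identity on which the collinearity argument rests. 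The paper's fix is a shared gadget: take one set $U$ of $r+1$ fresh vertices, pad every deficient edge with arbitrary vertices of $U$, and add all $\binom{r+1}{r}$ $r$-element subsets of $U$ as edges. Then for distinct $u,u'\in U$ the edge $U\setminus\{u'\}$ contains $u$ but not $u'$, so all edge sets are pairwise distinct; flags still lie in exactly two edges each, and the counting argument on the $q^2+q+1$ padded point-edges goes through verbatim. Substituting this (or any gadget whose auxiliary vertices have pairwise distinct links) for your pendant edges completes your proof.
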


We say that \emph{almost all} $r$-uniform hypergraphs on $n$ vertices have property $\mathcal{P}$ if as $n$ tends to infinity, $o\big{(}2^{\binom{n}{r}}\big{)}$ $r$-uniform hypergraphs on $n$ vertices do not have property~$\mathcal{P}$. It is easy to see that almost all uniform hypergraphs are nice. Furthermore, we prove the following statement.

\begin{theorem}\label{thm:main}\ \\[-15pt]
\begin{enumerate}[label=(\roman*)]
\item\label{thm:main:a} Almost all $3$ or $4$-uniform hypergraphs are strongly 2-weighted (but not 1-weigh\-ted). 
\item\label{thm:main:b} The probability that a hypergraph chosen uniformly at random from the space of all 5-uniform hypergraphs of order $n$ is strongly 1-weighted is $e^{-\sqrt{6/\pi}}+o(1)$ and that it is strongly 2-weighted (but not 1-weighted) is $1-e^{-\sqrt{6/\pi}}+o(1)$.
\item\label{thm:main:c} Let $r\ge 6$.Then, almost all $r$-uniform hypergraphs are strongly 1-weighted.
\end{enumerate}
\end{theorem}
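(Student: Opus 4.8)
The plan is to exploit the fact that a $1$-weighting offers no freedom: since $\omega$ must send every edge to $1$, the only induced colouring is the degree colouring $c(v)=\deg(v)$. Hence an $r$-uniform hypergraph is strongly $1$-weighted if and only if its degree colouring is strong, i.e.\ no edge contains two vertices of equal degree. It therefore suffices to prove that in a uniformly random $r$-uniform hypergraph on $n$ vertices---equivalently, the binomial random hypergraph in which each of the $\binom{n}{r}$ possible edges is present independently with probability $1/2$---w.h.p.\ \emph{all} degrees are distinct, as this trivially forces every edge to be rainbow. (Recall that ``almost all'' is exactly ``w.h.p.\ under $p=1/2$'' by the definition in the excerpt; note also that distinct degrees automatically gives a nice hypergraph.)

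The heart of the argument is an anti-concentration estimate for a single pair. Fix $u\neq v$ and split the potential edges into three independent groups: those containing $u$ but not $v$, those containing $v$ but not $u$, and those containing both. Writing $X_u,X_v$ for the numbers of present edges in the first two groups and $Z$ for the number in the third, we have $\deg(u)=X_u+Z$ and $\deg(v)=X_v+Z$, so that
\[
\deg(u)-\deg(v)=X_u-X_v,
\]
which is independent of the shared part $Z$. Here $X_u,X_v$ are i.i.d.\ $\bin(M,1/2)$ with $M=\binom{n-2}{r-1}$. Consequently
\[
\Pr[\deg(u)=\deg(v)]=\Pr[X_u=X_v]=\sum_{k}\Pr[X_u=k]\Pr[X_v=k]\le \max_k \binom{M}{k}2^{-M}=O\bigl(M^{-1/2}\bigr),
\]
using the standard bound $\binom{M}{\lfloor M/2\rfloor}2^{-M}=O(M^{-1/2})$ on the maximum of a symmetric binomial mass function. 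Since $M=\binom{n-2}{r-1}=\Theta(n^{r-1})$ for fixed $r$, this gives $\Pr[\deg(u)=\deg(v)]=O\bigl(n^{-(r-1)/2}\bigr)$.

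I would then finish with a first-moment (union) bound over all pairs. The expected number of pairs $\{u,v\}$ with $\deg(u)=\deg(v)$ is
\[
\binom{n}{2}\cdot O\bigl(n^{-(r-1)/2}\bigr)=O\bigl(n^{(5-r)/2}\bigr),
\]
which tends to $0$ precisely when $r\ge 6$. By Markov's inequality, w.h.p.\ there is no pair of vertices of equal degree; hence the degree colouring is strong and the hypergraph is strongly $1$-weighted, proving part~\ref{thm:main:c}.

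There is no serious obstacle here: the only quantitative input is the $O(M^{-1/2})$ anti-concentration bound, which is routine. What the computation really reveals is that $r=6$ is the exact threshold---the exponent $(5-r)/2$ is negative iff $r\ge 6$. For $r=5$ the same expectation instead converges to the positive constant $\tfrac12\sqrt{(r-1)!/\pi}=\tfrac12\sqrt{24/\pi}=\sqrt{6/\pi}$ once one uses the sharp asymptotic $\binom{2M}{M}4^{-M}\sim(\pi M)^{-1/2}$; a Poisson refinement then yields the limiting probability $e^{-\sqrt{6/\pi}}$ of part~\ref{thm:main:b}, a reassuring consistency check on the constants. One subtlety worth noting, though not needed for part~\ref{thm:main:c}, is that ``all degrees distinct'' and ``strongly $1$-weighted'' in fact share the same limiting probability, since any two vertices lie in a common present edge with probability $1-2^{-\binom{n-2}{r-2}}$, failing only with doubly-exponentially small probability.
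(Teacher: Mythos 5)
Your argument for part~\ref{thm:main:c} is correct and is essentially the paper's own proof: the paper likewise computes
$\Pr(\deg(u)=\deg(v))=\binom{2M}{M}2^{-2M}=O(M^{-1/2})$ with $M=\binom{n-2}{r-1}$ (your decomposition $\deg(u)-\deg(v)=X_u-X_v$ is exactly why that identity holds) and finishes with the first moment method over all $\binom{n}{2}$ pairs, getting the same exponent $n^{2-(r-1)/2}$. The problem is that the statement is all of Theorem~\ref{thm:main}, and your proposal only proves part~\ref{thm:main:c}. For part~\ref{thm:main:b}, the Poisson limit is asserted, not proved: one must verify every factorial moment $\mathrm{E}((X_2^{(5)})_k)\to\lambda^k$, and distinct pairs of vertices have \emph{dependent} degrees (they share potential edges), so this is not a routine refinement --- the paper devotes Lemma~\ref{lem:1} to controlling the joint distribution of degree coincidences among overlapping groups of vertices, precisely to show the non-vertex-disjoint terms in the $k$-th factorial moment are negligible. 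Moreover, part~\ref{thm:main:b} also claims that with probability $1-e^{-\sqrt{6/\pi}}+o(1)$ the hypergraph \emph{is} strongly 2-weighted, not merely ``not 1-weighted''; this requires a construction, which your proposal omits entirely. (The paper shows whp there are at most $\log n$ equal-degree pairs and no equal-degree triple, greedily finds disjoint edges $e_i\ni u_i$ through one vertex of each bad pair, assigns every edge weight 2, and flips the $e_i$ to weight 1.)

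Part~\ref{thm:main:a} is missing altogether, and it is the hardest part of the theorem. For $r=4$ one needs both directions: a second moment argument (together with Lemma~\ref{lem:2}, that whp every pair of vertices lies in a common edge) to show whp the hypergraph is \emph{not} strongly 1-weighted, plus a constructive 2-weighting that also handles equal-degree triples. For $r=3$ the difficulty jumps qualitatively: by your own exponent $(5-r)/2$, the expected number of equal-degree pairs is $\Theta(n)$, so the ``give everything weight 2 and flip a few edges'' strategy cannot work. The paper instead uses an equipartition, $\Theta(n^2)$ edge-disjoint perfect matchings (Lemmas~\ref{lem:bipartite} and~\ref{lem:hyper_matchings}), random real-valued vertex labels, randomized edge weights analyzed via Bernstein's inequality, and anti-concentration of the matching-edge contribution, before repairing the $O(\log^5 n)$ remaining bad pairs. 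None of this is hinted at in your proposal, so as it stands you have proved only one of the three parts.
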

Part~\ref{thm:main:c} is much easier, since a hypergraph is strongly 1-weighted if and only if there is no edge containing a pair of vertices of the same degree. Thus, the interesting parts are~\ref{thm:main:a} and~\ref{thm:main:b}. In particular, the latter which gives a nontrivial distribution between 1 or 2-weightedness. Theorem~\ref{thm:main} can be viewed as an extension of a result of Addario-Berry, Dalal and Reed~\cite{Gnp} who showed that almost all graphs are 2-weighted. (Our proofs are different.)

We complement the above result with an analogous statement for weakly weightedness. 
\begin{theorem}\label{thm:main2}\ \\[-15pt]
\begin{enumerate}[label=(\roman*)]
\item\label{thm:main2:a} Almost all $3$-uniform hypergraphs are weakly 2-weighted (but not 1-weighted). 
\item\label{thm:main2:b} Let $r\ge 4$. Then, almost all $r$-uniform hypergraphs are weakly 1-weighted.
\end{enumerate}
\end{theorem}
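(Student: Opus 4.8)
The plan is to handle the two parts with the same circle of ideas. Since a $1$-weighting must assign weight $1$ to every edge, the induced coloring is forced to be $c(v)=\deg(v)$, so $H$ is weakly $1$-weighted if and only if the degree coloring has no monochromatic edge. I would work in the equivalent model in which each of the $\binom{n}{r}$ potential edges is present independently with probability $1/2$. For a fixed $r$-set $e=\{v_1,\dots,v_r\}$, after conditioning on $e\in E$ the degrees $\deg(v_1),\dots,\deg(v_r)$ are sums of $N=\binom{n-1}{r-1}$ indicators that are almost independent across the $v_i$: they interact only through the $\Theta(n^{r-2})$ edges containing two of the $v_i$, a $\Theta(1/n)$ fraction of the $\Theta(n^{r-1})$ edges at each vertex. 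A local central limit estimate then gives
\[
\Pr\big(\deg(v_1)=\dots=\deg(v_r)\mid e\in E\big)=(1+o(1))\,c_r\,N^{-(r-1)/2}
\]
for an explicit constant $c_r$, so if $X$ counts the monochromatic edges then $\E X=\Theta\big(n^{r}N^{-(r-1)/2}\big)=\Theta\big(n^{\,r-(r-1)^2/2}\big)$. The exponent $r-(r-1)^2/2$ is negative exactly when $r>2+\sqrt3$.

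For $r\ge4$ this gives $\E X\to 0$, so Markov's inequality shows that almost every $H$ already has a weak $1$-weighting, which is part~\ref{thm:main2:b}. For $r=3$ the same computation gives $\E X=\Theta(n)\to\infty$, and I would pin down $X$ with a second moment argument: writing $\E[X^2]$ as a sum over pairs $(e,e')$ split according to $|e\cap e'|$, the diagonal and the overlapping terms each contribute $\Theta(n)$ (overlaps of size $2$ or $1$ force $4$ or $5$ vertices to share a degree, each of expected count $\Theta(n)$), while the disjoint pairs contribute $(1+o(1))(\E X)^2$ because distinct degree sequences in $\g(n,1/2)$-type models are asymptotically independent. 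Hence $\V X=o\big((\E X)^2\big)$, so \whp $X=(1+o(1))\E X=\Theta(n)$, and in particular \whp $X\ge1$, which proves that almost every $3$-uniform hypergraph is \emph{not} weakly $1$-weighted.

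It remains to show that almost every $3$-uniform hypergraph \emph{is} weakly $2$-weighted, and this is the genuinely harder half. I would start from the degree coloring $c_0(v)=\deg(v)$, whose set $B$ of monochromatic edges has $|B|=\Theta(n)$ \whp by the previous paragraph, and repair it by promoting a handful of edges from weight $1$ to weight $2$; promoting $f$ raises $c(v)$ by $1$ at each $v\in f$. To destroy a bad edge $e=\{u,v,w\}$ it suffices to bump exactly one of its vertices, which I would do by promoting a present edge $f$ meeting $e$ in that single vertex. The entire difficulty is to perform all $\Theta(n)$ repairs at once without any promotion creating a \emph{new} monochromatic edge: promoting $f=\{u,x,y\}$ also shifts $x$ and $y$, and a unit shift at $x$ turns an edge $\{x,p,q\}$ monochromatic precisely when $c_0(p)=c_0(q)=c_0(x)+1$. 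I would therefore call a vertex \emph{risky} if it lies in such a near-monochromatic triple, bound the number of risky vertices by $\Theta(n)$ with a first moment estimate of the same shape as above, and then fix the bad edges greedily, each time choosing the promoted edge so that its two auxiliary endpoints are neither risky, nor endpoints of an earlier promotion, nor vertices of another bad edge.

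The main obstacle is this last availability step. Each vertex has degree $\Theta(n^2)$ while the forbidden set (risky vertices, bad-edge vertices, and previously used vertices) has size $\Theta(n)$, so one must verify that through a suitable vertex of each bad edge there remain $\Theta(n^2)$ present edges avoiding the forbidden set, and that the three vertices of a bad edge cannot all be blocked simultaneously. Making this rigorous requires tracking the constants hidden in ``$\Theta(n)$ bad edges'' and ``$\Theta(n)$ risky vertices'' and checking that the forbidden fraction stays bounded away from $1$ throughout the process; the extra freedom of choosing which of the three vertices to bump, of promoting along edges meeting $e$ in two vertices, or of invoking a local lemma on the conflict hypergraph all provide slack here. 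Once a conflict-free promotion set is produced it yields a weight function $\omega\colon E\to\{1,2\}$ whose coloring is weak, and together with the second moment argument this establishes part~\ref{thm:main2:a}.
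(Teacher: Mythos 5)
Your part~\ref{thm:main2:b} and the ``not weakly 1-weighted'' half of part~\ref{thm:main2:a} are correct and essentially the paper's own arguments. For $r\ge 4$ the paper applies the first moment method to the number of \emph{quadruples} of equal-degree vertices (Section~\ref{sec:thm3:i}, via Lemma~\ref{lem:1}), while you count monochromatic edges directly; both give an expectation of $o(1)$. Your second moment computation for the number of monochromatic edges in the $3$-uniform case, split according to $|e\cap f|\in\{0,1,2\}$, is exactly Section~\ref{sec:thm3:ii}.

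The genuine gap is in the ``weakly 2-weighted'' half of part~\ref{thm:main2:a}. Your greedy repair needs, for each bad edge $e$, a vertex $u\in e$ that can be bumped without creating a new monochromatic edge; a bump at $u$ is unsafe precisely when there is a present edge $\{u,a,b\}$ with $c_0(a)=c_0(b)=c_0(u)+1$. By the same local-CLT estimate you invoke, the expected number of such configurations through a fixed vertex is a constant (about $1/(\pi\sqrt{3})$), so each vertex is risky with probability bounded below by a constant, and the expected number of bad edges \emph{all three} of whose vertices are risky is $\Theta(n)\to\infty$; nothing in your argument rules such fully blocked edges out, and one expects them whp. For such an edge your greedy has no legal move: promoting an edge meeting $e$ in two vertices does not help, since an unsafe vertex of $e$ is still bumped while the vertices $a,b$ of its near-monochromatic edge (which have degree $c_0(u)+1$, hence lie outside $e$ and outside your clean auxiliary set) are not bumped, so $\{u,a,b\}$ becomes monochromatic anyway. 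Repairing such edges therefore forces a cascade of new monochromatic edges; heuristically this cascade is subcritical (roughly $3/(\pi\sqrt{3})=\sqrt{3}/\pi<1$ new bad edges per promotion), but once the hypergraph is revealed there is no fresh randomness to drive that heuristic, and you supply no argument for termination. A second, smaller gap: ``$\Theta(n)$ risky vertices by a first moment estimate'' is not a whp statement---Markov only gives a bound of order $n\,\omega(n)$, which exceeds $n$---so even the claim that a positive proportion of vertices stays clean requires a concentration argument together with a proof that $\Pr(x\text{ is risky})$ is bounded away from $1$, neither of which is supplied.

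This obstacle is exactly why the paper does not prove part~\ref{thm:main2:a} by locally repairing the all-ones weighting. Instead it deduces weak 2-weightedness from the stronger Theorem~\ref{thm:main}\ref{thm:main:a}: whp $\h{3}(n,1/2)$ is \emph{strongly} 2-weighted, proved in Section~\ref{sec:thm2:ib} by a global randomized weighting---$\Theta(n^2)$ edge-disjoint perfect matchings (Lemma~\ref{lem:hyper_matchings}), random real vertex labels, label-biased random weights, and Bernstein's inequality---which spreads each color over a window of length $\Theta(n)$ and thereby cuts the number of dangerous pairs from $\Theta(n)$ to $O(\log^5 n)$ \emph{before} any local repair; only then is a greedy flip of few edges feasible. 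The paper flags your difficulty explicitly at the start of Section~\ref{sec:thm2:ib}: when the number of conflicts is linear in $n$, one ``cannot simply give all edges weight 2 and then alter the weighting by flipping the weights of a few edges.''
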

This theorem mainly follows from Theorem~\ref{thm:main} since strong weightedness implies weak weightedness. 

We also show that determining whether a particular uniform hypergraph is strongly 2-weighted is NP-complete. Consequently, there is no simple characterization of strongly 2-weighted hypergraphs, unless P=NP. (Note that determining 1-weightedness in an $r$-uniform hypergraph can be done in polynomial time, since a hypergraph is 1-weighted if and only if all vertex degrees are distinct.) 
Formally, let
\[
\weighted{r} = \left\{H : H \text{ is a strongly 2-weighted $r$-uniform hypergraph}\right\}.
\]
\begin{theorem}\label{thm:np}
Let $r\ge 3$. Then $\weighted{r}$ is NP-complete.
\end{theorem}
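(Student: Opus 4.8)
The plan is to establish the two directions of NP-completeness separately. Membership in NP is immediate: a strong $2$-weighting $\omega\colon E\to\{1,2\}$ is a certificate of size $O(|E|)$, and given $\omega$ one computes every color $c(v)=\sum_{e\ni v}\omega(e)$ and checks the at most $\binom{r}{2}$ rainbow inequalities on each edge in polynomial time. The substance of the theorem is therefore the hardness reduction, and I would give a polynomial-time reduction from a standard NP-complete satisfiability problem (for concreteness, $3$-SAT, or a restricted variant such as monotone NAE-$3$-SAT chosen to streamline the gadgets) to membership in $\weighted{r}$.

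The key reformulation is to write $\omega(e)=1+x_e$ with $x_e\in\{0,1\}$, so that $c(v)=\deg(v)+s(v)$, where $s(v)$ is the number of weight-$2$ edges at $v$ and $0\le s(v)\le \deg(v)$. The rainbow condition on an edge $e$ then asserts that the integers $\deg(u)+s(u)$, taken over $u\in e$, are pairwise distinct. The main leverage is that by \emph{pre-tuning degrees} one controls which of these constraints is active: if two vertices of an edge are engineered to have equal (or close) degrees, their rainbow constraint becomes a genuine condition on their $s$-values, whereas if one vertex is given a degree exceeding twice the degree of every other vertex in its edge, then its color dominates and it can never collide with them. This domination trick lets us make selected vertex pairs \emph{inert} (never in conflict under any weighting) while leaving the intended pairs as the only nontrivial constraints.

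I would first carry out the reduction for $r=3$. Each Boolean variable is represented by a designated edge (or a small cluster of edges whose weights are forced to agree by a local gadget), whose weight encodes its truth value; propagation/copy gadgets distribute that value to every clause in which the variable occurs, and a clause gadget is a constant-size $3$-uniform configuration whose edges admit a simultaneous rainbow $2$-weighting if and only if the clause is satisfied. Throughout, auxiliary vertices and edges are attached solely to fix degrees (via the domination trick) so that every vertex outside the intended constraints automatically receives a color distinct from its edge-mates under \emph{every} weighting; this guarantees that a satisfying truth assignment corresponds to a strong $2$-weighting and conversely. The construction is clearly polynomial in the size of the formula.

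To pass from $r=3$ to arbitrary $r\ge 3$, I would enlarge every edge of the $3$-uniform instance to size $r$ by appending $r-3$ fresh padding vertices, giving the padding vertices geometrically growing degrees (each more than twice the previous, and all exceeding twice every gadget degree) so that their color ranges $[\deg,2\deg]$ are pairwise disjoint and dominate the three original vertices. These padding vertices can never create or remove a rainbow violation, so the $r$-uniform instance lies in $\weighted{r}$ if and only if the $3$-uniform instance is strongly $2$-weighted. The main obstacle I anticipate is gadget correctness under the \emph{global} coupling of the counters $s(v)$: unlike an abstract CSP, each weight $x_e$ appears simultaneously in the color equations of all endpoints of $e$ and hence in the rainbow constraints of every incident edge, so a locally correct gadget can have unintended side effects. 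The delicate work is the degree-engineering --- adding just enough auxiliary edges to pin the relevant degrees and neutralize all unintended pairs via domination --- while keeping the instance polynomial and verifying the equivalence in both directions.
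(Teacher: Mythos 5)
Your proposal gets membership in NP right, and your ``lifting'' step from $3$-uniform to $r$-uniform instances (pad each edge with fresh vertices whose degrees are engineered so that their colors dominate and can never collide) is sound --- it is essentially the same padding/domination device the paper uses. But the heart of your argument has a genuine gap: the hardness of the base case is never actually proved. Your reduction from $3$-SAT to $\weighted{3}$ exists only as a plan --- ``variable gadgets,'' ``propagation/copy gadgets,'' and ``clause gadgets'' are named but never constructed, and no correctness argument is given for either direction of the equivalence. You yourself flag the obstacle: the counters $s(v)$ are globally coupled, so a locally plausible gadget can fail. That is precisely why such gadget constructions are nontrivial; the analogous statement for graphs ($\weighted{2}$ is NP-complete) was itself the subject of separate published papers (Dehghan--Sadeghi--Ahadi and Dudek--Wajc). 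As written, the proposal asserts that suitable gadgets can be built but supplies no evidence, so the reduction cannot be checked.

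The paper avoids this difficulty entirely: it takes the known NP-completeness of $\weighted{2}$ as a black box and reduces from it. Each graph edge $\{x,y\}$ becomes an $r$-edge $\{x,y,v_1,\dots,v_{r-2}\}$ with fresh vertices $v_i$, and to each $v_i$ one attaches a strongly $1$-weighted gadget $T(2in)$ (a bouquet of complete $r$-partite $r$-uniform hypergraphs sharing the root $v_i$), which pins $\deg(v_i)=2in\cdot r!$ and forces $2n<c_H(v_1)<\dots<c_H(v_{r-2})$ under the constructed weighting, while the colors of $x$ and $y$ are exactly their colors in $G$. This yields both directions of the equivalence in a few lines. If you want to rescue your approach, the cleanest fix is the same move: replace your unbuilt SAT gadgets by a reduction from the graph problem $\weighted{2}$ --- your padding-with-domination idea then does all the remaining work, and note that it already handles every $r\ge 3$ in one step, so the separate $r=3$ base case is unnecessary. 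One further caution about your version of the padding: ``giving the padding vertices geometrically growing degrees'' requires attaching auxiliary edges to them, and those edges must themselves admit (indeed, force) rainbow colorings under the weights available in both directions of the reduction; this is exactly what the paper's strongly $1$-weighted gadgets $T(k)$ are designed to guarantee, and it needs an explicit construction rather than an appeal to ``auxiliary edges.''
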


Although we were unable to provide a general upper bound for strongly weighted hypergraphs we believe that the following holds.
\begin{conjecture}
For every $r\ge 3$ there is a constant $w=w(r)$ such that each nice $r$-uniform hypergraph is strongly $w$-weighted.
\end{conjecture}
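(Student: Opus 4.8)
The statement is an open conjecture, so what follows is a line of attack rather than a proof. The natural template is the algorithmic machinery of Kalkowski, Karo\'nski and Pfender \cite{5enough, KKP}, which already yields a constant bound in the graph case and a linear-in-$r$ bound for \emph{weak} hypergraph weightings. The plan is to process the vertices of a nice $r$-uniform hypergraph $H=(V,E)$ one at a time in a carefully chosen order $v_1,\dots,v_n$, fixing the edge weights incrementally so that after step $i$ the induced coloring is already strong on every edge all of whose vertices have been processed, while retaining enough unused range in the weights incident to the unprocessed vertices to repair future conflicts. The target constant would be $w(r)=O(r^2)$, which is essentially forced from below by Theorem~\ref{thm:lb}\,\ref{thm:lb:a}.

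Concretely, the first step is to record, for each vertex $v$, the \emph{conflict set} consisting of all pairs $\{v,u\}$ that lie together in some edge. Niceness guarantees that for every such pair the symmetric difference of their edge-neighbourhoods is nonempty, so that $c(v)-c(u)$ is a genuinely nonconstant linear form in the weights and can in principle be forced away from $0$; this is exactly the place where the hypothesis that $H$ is nice enters. Second, I would take the processing order to be a degeneracy/peeling order of the ``shadow graph'' on $V$ whose edges are the conflict pairs, so that when $v_i$ is finalized it has only a controlled number of already-finalized conflict partners to avoid. Third, and most importantly, I would maintain for each still-active edge an interval of admissible weights together with a ``slack'' invariant guaranteeing that the colour of $v_i$ can be shifted into a value that simultaneously avoids all $\binom{r}{2}$ within-edge collisions in which it participates, after which the freshly used range is removed from the budget of the incident edges.

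The main obstacle — and the reason the conjecture remains open — is the coupling inherent in hypergraph weightings: changing a single edge weight $\omega(e)$ shifts the colours of \emph{all} $r$ vertices of $e$ by the same amount, so one cannot tune $c(v_i)$ in isolation. Every repair made to satisfy a constraint at $v_i$ simultaneously perturbs up to $r-1$ neighbours inside each incident edge, and these perturbations can cascade and undo constraints settled at earlier steps. Controlling this interference — either through a potential function bounding the total disturbance, or by reserving disjoint ``private'' weight ranges per vertex so that repairs are effectively decoupled — is the crux, and it is precisely where the graph argument (in which each edge touches only two vertices, so the conflict graph coincides with the input graph) fails to transfer. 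A purely algebraic alternative via the Combinatorial Nullstellensatz applied to $P=\prod_{e\in E}\prod_{\{u,v\}\subseteq e}\of{c(u)-c(v)}$ meets the same wall from the other side: the per-variable degree of $\omega(e)$ in $P$ grows with the degrees of the vertices of $e$, so bounding it by a function of $r$ alone again demands a structural decoupling argument rather than a naive degree count.
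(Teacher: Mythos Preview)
The paper does not prove this statement; it is explicitly presented as an open conjecture, with the authors writing that they ``were unable to provide a general upper bound for strongly weighted hypergraphs.'' Your proposal correctly recognises this and offers a programme rather than a proof, so there is no proof in the paper to compare against beyond the remark following the conjecture that, if true, $w(r)\ge r^2-r+1$ for infinitely many $r$ by Theorem~\ref{thm:lb}\ref{thm:lb:a}.

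Your sketch is a reasonable and honest assessment of the difficulty. The Kalkowski--Karo\'nski--Pfender template is indeed the natural starting point, and you identify precisely the obstruction that separates the hypergraph case from the graph case: a single edge weight simultaneously shifts the colours of all $r$ of its vertices, so the sequential-repair argument cannot localise changes. Your alternative via the Combinatorial Nullstellensatz and the reason it stalls (the degree of $\omega(e)$ in the conflict polynomial grows with the vertex degrees, not with $r$) is also correctly diagnosed. None of this constitutes a gap on your part, since you do not claim a proof; it is simply an accurate account of why the conjecture remains open.
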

\noindent
If true, then by Theorem~\ref{thm:lb} such constant $w$ is at least $r^2-r+1$ for infinitely many~$r$, and at least $r^2 - o(r^2)$ for all $r$. For weakly weighted hypergraphs we believe that the 1-2-3 conjecture for graphs also holds for hypergraphs.
\begin{conjecture}
For each $r\ge 3$ every $r$-uniform hypergraph without isolated edges is weakly $3$-weighted.
\end{conjecture}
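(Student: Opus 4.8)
The plan is to attack the conjecture by the probabilistic method, though I should say at the outset that the $r=2$ analogue of this statement is exactly the original 1-2-3 conjecture for graphs, which is still open, so I do not expect to settle it in full; what I can describe is where a natural attack succeeds and where it stalls. The first move is to give each edge an independent uniformly random weight $\omega(e)\in\{1,2,3\}$ and, for each edge $e=\{v_1,\dots,v_r\}$, to introduce the bad event $A_e$ that $e$ is monochromatic, i.e.\ $c(v_1)=\dots=c(v_r)$. Since the shared term $\omega(e)$ cancels from every difference $c(v_i)-c(v_j)$, the event $A_e$ depends only on the weights of edges meeting $\{v_1,\dots,v_r\}$ other than $e$, so $A_e$ is mutually independent of all $A_f$ whose edge $f$ is vertex-disjoint from $e$. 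I would then apply the Lov\'asz Local Lemma: the dependency degree of $A_e$ is at most roughly $r\Delta$, where $\Delta$ is the maximum vertex degree, so it would suffice to bound $\Pr[A_e]$ by about $1/(er\Delta)$.

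The quantitative heart is anti-concentration. Writing $X_i=\sum_{f\ni v_i,\,f\neq e}\omega(f)$, the event $A_e$ is precisely $X_1=\dots=X_r$, a system of $r-1$ simultaneous collisions. If the vertices of $e$ all have degree at least $d$, a Littlewood--Offord / local central limit estimate suggests that any single difference $X_i-X_j$ hits a fixed value with probability $O\of{d^{-1/2}}$, and exploiting the largely disjoint supports of the $X_i$ one expects $\Pr[A_e]=O\of{d^{-(r-1)/2}}$. For $r\ge 4$ this decays fast enough to meet the local lemma condition $e\cdot\Pr[A_e]\cdot(r\Delta+1)<1$ as soon as the degrees exceed an absolute constant, and the exponent $(r-1)/2$ only becomes more generous as $r$ grows; thus the random weighting is already weak on the subhypergraph spanned by high-degree vertices. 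For $r=3$, however, the exponent is exactly $1$, which cancels the linear dependency degree, so even the high-degree case sits at the boundary of the local lemma and is off by a constant factor — a first sign that $r=3$ is the critical regime.

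The main obstacle, and the reason the conjecture remains open, is the low-degree regime. The anti-concentration bound degenerates entirely when an edge contains vertices of degree $1$ or $2$: such colors have almost no spread, a collision has probability $\Omega(1)$, and the local lemma yields nothing. This is exactly the configuration that makes the graph version hard, and excluding isolated edges does not remove it. The route I would pursue is a hybrid one: peel off the low-degree vertices, weight the high-degree core at random as above, and then re-insert the low-degree vertices greedily using an algorithmic Kalkowski--Karo\'nski--Pfender type invariant that keeps each newly coloured vertex outside the forbidden color set of each of its edges. The crux is that the KKP method in its sharpest known form spends strictly more than three weights — five for graphs and $5r-5$ here — so compressing the low-degree bookkeeping down to the palette $\{1,2,3\}$ while keeping it compatible with the randomly weighted core is precisely the step I do not know how to carry out, and where I expect any genuine progress on the conjecture to lie.
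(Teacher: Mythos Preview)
This statement appears in the paper as an open \emph{conjecture}, not a theorem; the authors offer no proof and explicitly frame it as what they ``believe,'' immediately after noting that the $r=2$ case is the original (still open) 1-2-3 conjecture. There is therefore no paper proof to compare against. Your write-up is equally candid that it is not a proof: you announce at the outset that you ``do not expect to settle it in full,'' and you close by naming the step you ``do not know how to carry out.'' What you have produced is a heuristic discussion of one possible line of attack and of where it stalls --- reasonable commentary on an open problem, but not a proof.

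One technical point in your sketch is genuinely wrong and worth flagging. You assert that $A_e$ is mutually independent of every $A_f$ with $f$ vertex-disjoint from $e$. In an $r$-uniform hypergraph with $r\ge 3$ this is false: a third edge $g$ can contain one vertex of $e$ and one vertex of $f$ even when $e\cap f=\emptyset$, and then $\omega(g)$ enters the color sums determining both $A_e$ and $A_f$. The correct dependency graph for the Local Lemma links $A_e$ to every $A_f$ for which some edge $g$ meets both, giving dependency degree $O(r^2\Delta^2)$ rather than $O(r\Delta)$. With your anti-concentration exponent $(r-1)/2$, the LLL condition then requires $(r-1)/2>2$, i.e.\ $r\ge 6$, not $r\ge 4$; so even the high-degree core argument is off by a polynomial factor for $r\in\{3,4,5\}$, not merely by a constant at $r=3$. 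This does not change your bottom line that the approach fails to close, but it does shift where the trouble begins.
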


\section{Lower bound}

Here we prove Theorem~\ref{thm:lb}. We start with part~\ref{thm:lb:a}.
First we recall some basic properties of projective planes (see, e.g.,~\cite{GG}).
A \textit{projective plane} $P(q)$ of order $q$ is an incidence structure on a set~${P}$ of points and a set~$L$ of lines such that: any two points lie in a unique line, and every line contains~$q+1$ points, and every point lies on~$q+1$ lines.
It is known that for every prime power~$q$ such incidence structure~$P(q)$ exists with~$|{P}|=|L|=q^2+q+1$.
In other words, $P(q)$ is a $(q+1)$-regular  $(q+1)$-uniform hypergraph of order $q^2+q+1$.

Suppose that a projective plane $(P,L)$ of order of~$q$ exists. We create a $2$-regular $(q+1)$-uniform hypergraph $H=(V,E)$ as follows. First we blow up each point $p\in P$ exactly $q+1$ times. Formally, 
\[
V = \{ (p,\ell) : p\in P, \ell\in L, p\in\ell \}.
\]
Clearly, $|V| = (q+1)(q^2+q+1)$. We define two types of edges of $H$. Let $p\in P$ and $\ell_1,\dots,\ell_{q+1}$ be lines incident with $p$. Then, $E_1$ consists of all edges of the form $\{(p,\ell_1), (p,\ell_2),\dots, (p,\ell_{q+1})\}$, which we denote by $e(p)$. Thus,
\[
E_1 = \{ e(p) = \{(p,\ell_1),\dots, (p,\ell_{q+1})\} : p\in P \text{ and } p \in \ell_i  \text{ for all } 1\le i\le q+1\}.
\]
Similarly, we define
\[
E_2 = \{ f(\ell)=\{(p_1,\ell),\dots,(p_{q+1},\ell)\} : \ell\in L \text{ and } p_i\in \ell \text{ for all } 1\le i\le q+1 \}.
\]
Set $E=E_1\cup E_2$. It is easy to see that $H$ is $2$-regular $(q+1)$-uniform hypergraph and nice.

We claim that $H$ is not strongly $(q^2+q)$-weighted. Assume for a contradiction that it is. Let $\omega:E\to\{1,\dots,q^2+q\}$ be such that the vertex-coloring $c$ induced by $\omega$ is strong. Since $|E_1|=q^2+q+1  > |\omega(E)|$, there are $e(p_1)$ and $e(p_2)$ in $E_1$ such that $\omega(e(p_1)) = \omega(e(p_2))$. By properties of $P(q)$ there is a line $\ell$ incident with $p_1$ and $p_2$. Thus,  $f(\ell)$ in $E_2$  contains $(p_1,\ell)$ and $(p_2,\ell)$. But
\[
c((p_1,\ell)) = \omega(e(p_1)) + \omega(f(\ell)) = \omega(e(p_2)) + \omega(f(\ell)) = c((p_2,\ell))
\]
and so $f(\ell)$ is not rainbow, a contradiction.

Set $r = q+1$. Clearly, $q^2+q = r^2-r$. So for each $r-1$ prime power there exists an $r$-uniform hypergraph which is not strongly  $(r^2-r)$-weighted.

The remaining part of~\ref{thm:lb:a} is very similar. It is well-known that for large $x$ there exists a prime number between $x(1-o(1))$ and $x$ (see, e.g.,~\cite{BHP}). Hence, for large $r$ there is a prime number $q$ such that $(r-1)(1-o(1))\le q\le r-1$. First, as above we start with $P(q)$ and construct a $2$-regular $(q+1)$-uniform hypergraph $H=(V,E)$ by blowing up each vertex of $P(q)$. If $q=r-1$, then we are done. Otherwise, if $q < r-1$, then we extend $H$ to an $r$-uniform hypergraph $I = (W,F)$. Let $W=V\cup U$, where $|U| = r+1$. For each $e\in E$ we define a new edge in~$F$ be adding to $f$ arbitrarily $r-(q+1)$ vertices from $U$. Finally, we add all possible $\binom{r+1}{r}$ edges on $U$. The resulting hypergraph $I$ is $r$-uniform, nice, and not strongly $(q^2+q)$-weighted. Since $q^2+q = r^2-o(r^2)$, the proof is finished.

\begin{figure}
\includegraphics[scale=0.9]{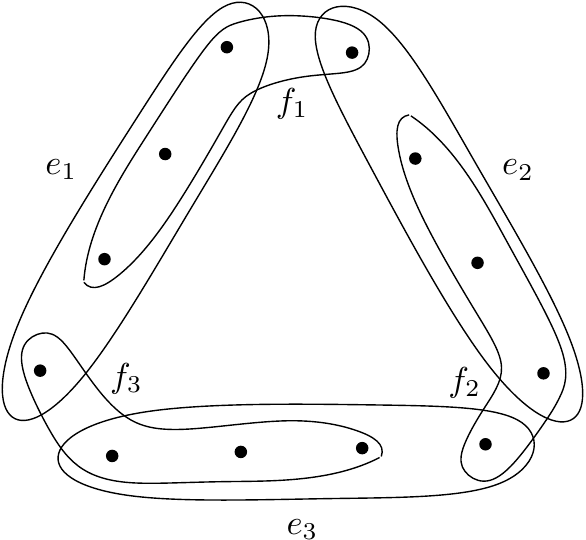}
\caption{A 4-uniform hypergraph which is not weakly 2-weighted.}
\label{fig:1}
\end{figure}

Now we prove~\ref{thm:lb:b}, which actually is an easy observation. Indeed, define an $r$-uniform hypergraph $H=(V,E)$ 
on $V=\{x_1,\dots,x_r,y_1,\dots,y_r,z_1,\dots,z_r\}$ with edges  $e_1 = \{x_1,\dots,x_r\}$, $e_2 = \{y_1,\dots,y_r\}$, $e_3 = \{z_1,\dots,z_r\}$, $f_1 = \{x_2,\dots,x_r,y_1\}$, $f_2 = \{y_2,\dots,y_r,z_1\}$, and $f_3 = \{z_2,\dots,z_r,x_1\}$ (see Figure~\ref{fig:1}). If there is a weak coloring of $H$ induced by some $w:E\to\{1,2\}$, then for some $i,j$ we must have $\omega(e_i) = \omega(e_j)$ and consequently edge $f_\ell \subseteq V(e_i) \cup V(e_j)$ is monochromatic, a contradiction.

\section{Tools used in the proof of Theorems~\ref{thm:main} and~\ref{thm:main2}}\label{sec:prelim}

In order to prove Theorems~\ref{thm:main} and ~\ref{thm:main2} we will consider random hypergraphs. Let $\h{k}(n,p)$ be an \emph{$r$-uniform random hypergraph} such that each of $\binom{[n]}{r}$ $r$-tuples is contained with probability~$p$, independently of others. We say that an event $E_n$ occurs \emph{with high probability}, or \whp for brevity, if $\lim_{n\rightarrow\infty}\Pr(E_n)=1$. We will use some standard probabilistic tools, which we state here for convenience (for more details see, e.g., \cite{AS,JLR}).

\begin{c2}
Let $X$ be a nonnegative integral random variable. If $\E(X) = o(1)$, then \whp $X=0$.
\end{c2}

\begin{c4}
Let $X$ be a nonnegative integral random variable. If $\V(X) = o(\E(X)^2)$, then \whp $X \ge 1$.
\end{c4} 

Let $\po(\lambda)$ denote the random variable with Poisson distribution with mean $\lambda$. Moreover, let $(X)_k :=X(X-1)(X-2) \cdots (X-k+1)$ denote the factorial moment of the random variable~$X$.
\begin{c5}
Let $\lambda$ be a positive constant. Suppose that $X_1,X_2,\dots$ are random variables such that for each fixed $k$ we have $\E((X_n)_k)\to\lambda^k$ as $n$ tends to infinity. Then, $X_n\sim \po(\lambda)$.
\end{c5}

\begin{ineq}\ \\[-15pt]
\begin{enumerate}[label=(\roman*)]
\item \emph{Markov's bound:} Let $X$ be a nonnegative integral random variable and $\gamma>0$. Then
\[
\Pr(X \ge \gamma) \le \E(X) / \gamma.
\]
\item \emph{Chernoff's bound:} Let $\bin(n,p)$ denote the random variable with binomial distribution with number of trials~$n$ and probability of success~$p$. If $X \sim \bin(n,p)$ and $0 < \gamma \leq \E(X)$, then
\[
\Pr \left( |X - \E(X)| \geq \gamma \right) \leq 2 \exp \left( -\gamma^2/(3\E(X)) \right) .
\]
\item \emph{Bernstein's bound:} Let $X_1, \ldots, X_m$ be independent random variables, and $X = \sum_{i=1}^m X_i$. Suppose that $|X_i - \E(X_i)| \le C$ always holds for all $i$, and $\gamma>0$. Then,
\[
\Pr \left( |X - \E(X)| \geq \gamma \right) \leq 2\exp \left ( -\frac{\frac{1}{2} \gamma^2}{\sum_{i=1}^m \V(X_j)+\tfrac{1}{3} C\gamma} \right ).
\]
\item \emph{Union bound:} If $E_1,\dots,E_m$ are events, then 
\[
\Pr \Big( \bigcup_{i=1}^m E_i \Big) \leq m \cdot \max \{\Pr(E_i): i \in [m]\}.
\]
\end{enumerate}
\end{ineq}

Several times we will also need to estimate binomial coefficients (for more details see, e.g., Chapter 22 in~\cite{Handbook}).

\newpage

\begin{bc}\ \\[-15pt]
\begin{enumerate}[label=(A\arabic*)]
\item\label{approx:2} Let $p>0$ and $\ell$ be functions of $m$ ($\ell$ can be negative). Assume that $\ell^2 = o(p)$ and $\ell^2 = o(m-p)$ as $m$ tends to infinity. Then,
\[
\binom{m}{p+\ell}\sim \binom{m}{p} \left(\frac{m-p}{p}\right)^\ell.
\]

\item\label{approx:1} Let $k\ge 1$ be a fixed integer. Then, 
\[
\sum_{i=0}^m \binom{m}{i}^k \sim \left( 2^m \sqrt{\frac{2}{\pi m}} \right)^k \sqrt{\frac{\pi m}{2k}}
= \Theta(2^{km} m^{-(k-1)/2}). 
\]

\item\label{approx:3} 
\[
\binom{m}{\lfloor m/2 \rfloor} \sim \frac{2^{m+1/2}}{\sqrt{\pi m}}.
\]
\end{enumerate}
\end{bc}

Throughout the next sections all logarithms are natural (base $e$) and all asymptotics are taken in $n$. 

We prove Theorem~\ref{thm:main} in Sections~\ref{sec:thm2:iii}-\ref{sec:thm2:ib} starting with the easiest case $r\ge 6$ and Theorem~\ref{thm:main2} in Sections~\ref{sec:thm3:i}-\ref{sec:thm3:ii}.

\section{For any $r\ge 6$ almost all $r$-uniform hypergraphs are strongly 1-weighted}\label{sec:thm2:iii}

Let $X_{2}^{(r)}$ count the number of pairs of vertices $\{u,v\}$ in $\h{r}(n,1/2)$ such that $\deg(u)=\deg(v)$. Observe that
\[
\Pr(\deg(u)=\deg(v)) = \sum_{a=0}^{\binom{n-2}{r-1}}  \binom{\binom{n-2}{r-1}}{a}^2 2^{-2\binom{n-2}{r-1}}
= \binom{2\binom{n-2}{r-1}}{\binom{n-2}{r-1}} 2^{-2\binom{n-2}{r-1}}.
\]
Thus, by~\ref{approx:3} 
\begin{equation}\label{eq:X2}
\E(X_2^{(r)}) = \binom{n}{2} \binom{2\binom{n-2}{r-1}}{\binom{n-2}{r-1}} 2^{-2\binom{n-2}{r-1}} 
\sim \frac{n^2}{2} \cdot \frac{2^{2\binom{n-2}{r-1}+1/2}}{\sqrt{\pi 2\binom{n-2}{r-1}}} \cdot 2^{-2\binom{n-2}{r-1}}
\sim \frac{\sqrt{(r-1)!}}{2\sqrt{\pi}} n^{2-(r-1)/2} 
\end{equation}
which is $o(1)$ for any $r\ge 6$. Thus, the first moment method yields the statement.

\section{Almost all $5$-uniform hypergraphs are either 1 or 2 strongly weighted}\label{sec:thm2:ii}

By~\eqref{eq:X2}, $E(X_2^{(5)}) \sim \sqrt{6/\pi}$. Set $\lambda=\sqrt{6/\pi}$.
We will show by using the method of moments that $X_2^{(5)} \sim \po(\lambda)$. First we prove two auxiliary results which we will also use in the next sections.

\begin{lemma}\label{lem:2}
Let $r\ge 3$. Then, \whp each pair of vertices of $\h{r}(n,1/2)$ is contained in an edge.
\end{lemma}
\begin{proof} 
The probability that a fixed pair of vertices $u$ and $v$ is not contained in any edge is $2^{-\binom{n-2}{r-2}}= 2^{-\Omega(n^{r-2})}$. Thus, by the union bound we get that the probability that there exists a pair of vertices which is not contained in any edge is  at most $\binom{n}{2} 2^{-\Omega(n^{r-2})} = o(1)$.
\end{proof}

\begin{lemma}\label{lem:1}
Let $r\ge 3$ and $k_1,\dots,k_\alpha\ge2$ be integers. Let $k = k_1+\dots+k_\alpha$. Then, \whp
for each $\{v_{1,1},\dots,v_{1,k_1},v_{2,1},\dots,v_{2,k_2},\dots,v_{\alpha,1},\dots,v_{\alpha,k_\alpha}\} \subseteq [n]$ in $\h{r}(n,1/2)$,
\begin{align}
\label{lem:1:statement:1} \Pr\left( \bigcap_{i=1}^\alpha \deg(v_{i,1}) = \dots = \deg(v_{i,k_i}) \right) 
&\sim 2^{-k\binom{n-k}{r-1}} \prod_{i=1}^{\alpha} \sum_{a=0}^{\binom{n-k}{r-1}} \binom{\binom{n-k}{r-1}}{a}^{k_i}\\
\label{lem:1:statement:2}&= O(n^{-(r-1)(k-\alpha)/2}).
\end{align}
\end{lemma}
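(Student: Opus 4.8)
The plan is to compute the probability $P := \Pr\big(\bigcap_{i=1}^\alpha \deg(v_{i,1})=\dots=\deg(v_{i,k_i})\big)$ by isolating, for each special vertex $v$, the ``private'' part of its degree from the part coming from edges it shares with other special vertices. Writing $M := \binom{n-k}{r-1}$, I would let $D_v$ be the number of edges of $\h{r}(n,1/2)$ that contain $v$ together with $r-1$ of the $n-k$ non-special vertices. Each $D_v$ is then distributed as $\bin(M,1/2)$, and because the potential edges counted by distinct $D_v$ are pairwise disjoint (and disjoint from every edge meeting two special vertices), the family $\{D_v\}$ is mutually independent and independent of the configuration of all edges containing at least two special vertices.

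Next I would condition on that shared configuration. Fixing it turns each equality $\deg(v_{i,j})=\deg(v_{i,1})$ into $D_{v_{i,j}}-D_{v_{i,1}}=\ell_{i,j}$, where the shift $\ell_{i,j}$ is the signed number of shared edges distinguishing $v_{i,1}$ from $v_{i,j}$ (edges containing both cancel). Since $\{D_v\}$ is independent across groups, the conditional probability factors as $\prod_{i=1}^\alpha Q_i(\ell_{i,2},\dots,\ell_{i,k_i})$ with $Q_i(\ell_2,\dots,\ell_{k_i})=2^{-k_iM}\sum_{a}\binom{M}{a}\prod_{j=2}^{k_i}\binom{M}{a+\ell_j}$; at zero shifts $Q_i(0,\dots,0)=2^{-k_iM}\sum_a\binom{M}{a}^{k_i}$, whose product over $i$ is exactly the right-hand side of~\eqref{lem:1:statement:1}. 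So the whole task reduces to showing $P=\E\big[\prod_i Q_i(\ell_{i,\cdot})\big]\sim\prod_i Q_i(0)$, i.e. that the shifts may be ignored.

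The main obstacle — and the heart of the argument — is controlling these shifts. Each $\ell_{i,j}$ is a sum of independent $\pm1$-valued terms over the $O(n^{r-2})$ potential edges meeting exactly one of $v_{i,1},v_{i,j}$ and at least one further special vertex; by the symmetry swapping $v_{i,1}\leftrightarrow v_{i,j}$ it is centered, and its variance is $O(n^{r-2})$. By Bernstein's inequality, for $L:=n^{(r-2)/2}\log n$ the bad event $G^c=\{\exists\,i,j:|\ell_{i,j}|>L\}$ has probability $\exp(-\Omega(\log^2 n))$, which is super-polynomially small. On the good event $G$ I would invoke~\ref{approx:2}: since $L^2=o(M)$, for every $a$ in the central window $|a-M/2|\le\sqrt M\log M=:T$ one has $\binom{M}{a+\ell_j}\sim\binom{M}{a}\big(\tfrac{M-a}{a}\big)^{\ell_j}$, and the accumulated correction satisfies $\big|\sum_j\ell_j\cdot\log\tfrac{M-a}{a}\big|=O(LT/M)=O(n^{-1/2}\log^2 n)=o(1)$ uniformly over this window and over all shifts of size at most $L$; together with the standard sub-Gaussian tail bound on binomial coefficients (which confines both $\sum_a\binom{M}{a}^{k_i}$ and its shifted analogue to the window) this gives $Q_i(\ell_{i,\cdot})\sim Q_i(0)$ on $G$.

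Finally I would assemble the pieces via $P=\E[\prod_iQ_i\mathbf{1}_G]+\E[\prod_iQ_i\mathbf{1}_{G^c}]$, where the first term is $(1+o(1))\prod_iQ_i(0)\Pr(G)\sim\prod_iQ_i(0)$ and the second is at most $\Pr(G^c)$, which is super-polynomially small and hence $o(\prod_iQ_i(0))$ because~\ref{approx:1} gives $\prod_iQ_i(0)=\Theta(M^{-(k-\alpha)/2})=\Theta(n^{-(r-1)(k-\alpha)/2})$, a merely polynomial quantity. This establishes~\eqref{lem:1:statement:1}, and the same application of~\ref{approx:1} simultaneously yields the bound $O(n^{-(r-1)(k-\alpha)/2})$ asserted in~\eqref{lem:1:statement:2}.
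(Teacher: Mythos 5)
Your proposal is correct and takes essentially the same approach as the paper's proof: both decompose each degree into a private part $D_v\sim\bin\left(\binom{n-k}{r-1},1/2\right)$ plus the contribution of edges containing at least two special vertices, condition on that shared configuration so that each group reduces to a sum of products of shifted binomial coefficients, remove the shifts using~\ref{approx:2}, and deduce~\eqref{lem:1:statement:2} from~\ref{approx:1}. The only difference is technical bookkeeping: the paper controls the shifts by applying the Chernoff bound to each shared count $x_S$ separately and tracking deviations from their means, while you apply Bernstein's bound to the centered pairwise differences $\ell_{i,j}$; both give shifts of order $O(n^{(r-2)/2}\log n)$ against a central window of width of order $n^{(r-1)/2}\log n$, so the two arguments coincide in substance.
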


\begin{proof}
Let $U = \{v_{1,1},\dots,v_{1,k_1},v_{2,1},\dots,v_{2,k_2},\dots,v_{\alpha,1},\dots,v_{\alpha,k_\alpha}\}$ with $k=|U|$. For a fixed $S\subseteq U$ with $1\le |S|\le r$, let $x_S$ be the random variable that counts the number of edges containing $S$ and $r-|S|$ other vertices from $[n]\setminus U$. Clearly, $x_S\sim \bin\left( \binom{n-k}{r-|S|},1/2 \right)$.
The Chernoff bound together with the union bound yield that \whp for any $U$ and $S\subseteq U$,
\begin{equation}\label{eq:1}
\left|x_S - {\binom{n-k}{r-|S|}}/{2}\right| = O(n^{(r-|S|)/2}\sqrt{\log n}) = O(n^{(r-1)/2}\sqrt{\log n}).
\end{equation}
Let $v\in U$. Then,
\[
\deg(v) = \sum_{v\in S \subseteq U} x_S
\]
and the expected value is $\sum_{1\le s \le r}\binom{k-1}{s-1} {\binom{n-k}{r-s}}/{2}$.
Thus, \whp for any $U$ and $v\in U$,
\[
\left| \deg(v) - \sum_{1\le s < (r+1)/2}\binom{k-1}{s-1} {\binom{n-k}{r-s}}/{2}\right| = O(n^{(r-1)/2}\sqrt{\log n}).
\]
Conditioning on $x_S=\beta_S$ for $2\le |S|\le r$ and $S\subseteq U$, we get that the probability that $\deg(v) = a$ is 
\[
\binom{\binom{n-k}{r-1}}{a-\sum_{S\ni v}\beta_S} 2^{-\binom{n-k}{r-1}}.
\]
Due to~\eqref{eq:1} we may assume that $\beta_S = {\binom{n-k}{r-|S|}}/{2} + \ell_S$ and  $|a - \sum_{1\le s < (r+1)/2}\binom{k-1}{s-1} {\binom{n-k}{r-s}}/{2}| = O(n^{(r-1)/2}\sqrt{\log n})$, where $|\ell_S| = O(n^{(r-|S|)/2}\sqrt{\log n})=O(n^{(r-2)/2}\sqrt{\log n})$.

By~\ref{approx:2}, applied with $m=\binom{n-k}{r-1}$, $p=a-\sum_{S\ni v} {\binom{n-k}{r-|S|}}/{2}$, and $\ell = \sum_{S\ni v} \ell_S$, we obtain that
\begin{align*}
\binom{\binom{n-k}{r-1}}{a-\sum_{S\ni v}\beta_S}
&= \binom{\binom{n-k}{r-1}}{\big{(}a-\sum_{S\ni v} {\binom{n-k}{r-|S|}}/{2}\big{)}-\sum_{S\ni v} \ell_S}\\
& \sim \binom{\binom{n-k}{r-1}}{a-\sum_{S\ni v} {\binom{n-k}{r-|S|}}/{2}} \left( \frac{\binom{n-k}{r-1} - \left(a-\sum_{S\ni v} {\binom{n-k}{r-|S|}}/{2}\right)}{a-\sum_{S\ni v} {\binom{n-k}{r-|S|}}/{2}} \right)^{\sum_{S\ni v} \ell_S}.
\end{align*}
Now observe that 
\[
a-\sum_{S\ni v} {\binom{n-k}{r-|S|}}/{2}
= a - \sum_{2\le s \le r} \binom{k-1}{s-1} \binom{n-k}{r-s}/2
= \binom{n-k}{r-1}/2\;\pm\; O(n^{(r-1)/2}\sqrt{\log n}).
\]
This implies that
\[
\frac{\binom{n-k}{r-1} - \left(a-\sum_{S\ni v} {\binom{n-k}{r-|S|}}/{2}\right)}{a-\sum_{S\ni v} {\binom{n-k}{r-|S|}}/{2}} 
\le \frac{ {\binom{n-k}{r-1}}/{2} + O(n^{(r-1)/2}\sqrt{\log n}) }{  {\binom{n-k}{r-1}}/{2} - O(n^{(r-1)/2}\sqrt{\log n}) }
= 1 + O\left( \frac{\sqrt{\log n}}{n^{(r-1)/2}} \right)
\]
and
\[
\frac{\binom{n-k}{r-1} - \left(a-\sum_{S\ni v} {\binom{n-k}{r-|S|}}/{2}\right)}{a-\sum_{S\ni v} {\binom{n-k}{r-|S|}}/{2}}
\ge \frac{ {\binom{n-k}{r-1}}/{2} - O(n^{(r-1)/2}\sqrt{\log n}) }{  {\binom{n-k}{r-1}}/{2} + O(n^{(r-1)/2}\sqrt{\log n}) }
= 1 - O\left( \frac{\sqrt{\log n}}{n^{(r-1)/2}} \right).
\]
Hence, since $|\sum_{S\ni v_i} \ell_S| = O(n^{(r-2)/2}\sqrt{\log n})$, we get that 
\[
\left( \frac{\binom{n-k}{r-1} - \left(a-\sum_{S\ni v} {\binom{n-k}{r-|S|}}/{2}\right)}{a-\sum_{S\ni v} {\binom{n-k}{r-|S|}}/{2}} \right)^{\sum_{S\ni v} \ell_S} \sim 1,
\]
and consequently,
\[
\binom{\binom{n-k}{r-1}}{a-\sum_{S\ni v}\beta_S} \sim \binom{\binom{n-k}{r-1}}{a-\sum_{S\ni v} {\binom{n-k}{r-|S|}}/{2}}.
\]
Thus, conditioning on $\beta_S$'s for $S\subseteq U$ with $2\le |S| \le r$ the probability that $\deg(v_{i,1}) = \dots = \deg(v_{i,k_i})=a_i$ for each $1\le i\le \alpha$ equals 
\begin{align*}
2^{-k\binom{n-k}{r-1}} \cdot \prod_{i=1}^\alpha \prod_{j=1}^{k_i} \binom{\binom{n-k}{r-1}}{a_i-\sum_{S\ni v_{i,j}}\beta_S}
&= 2^{-k\binom{n-k}{r-1}} \cdot \prod_{i=1}^\alpha \binom{\binom{n-k}{r-1}}{a_i-\sum_{S\ni v_{i,1}}\beta_S}^{k_i} \\
&\sim 2^{-k\binom{n-k}{r-1}} \cdot \prod_{i=1}^\alpha \binom{\binom{n-k}{r-1}}{a_i-\sum_{S\ni v_{i,1}} {\binom{n-k}{r-|S|}}/{2}}^{k_i}
\end{align*}
and further the probability that $\deg(v_{i,1}) = \dots = \deg(v_{i,k_i})$ for each $1\le i\le \alpha$ (still conditioning on $\beta_S$'s) asymptotically equals
\begin{align*}
&2^{-k\binom{n-k}{r-1}} \cdot \sum_{a_1,\dots,a_{\alpha}} \binom{\binom{n-k}{r-1}}{a_1-\sum_{S\ni v_{1,1}} {\binom{n-k}{r-|S|}}/{2}}^{k_1} \cdot \ldots \cdot \binom{\binom{n-k}{r-1}}{a_\alpha-\sum_{S\ni v_{k_\alpha,1}} {\binom{n-k}{r-|S|}}/{2}}^{k_\alpha}\\
&=2^{-k\binom{n-k}{r-1}} \cdot \left(\sum_{a_1}\binom{\binom{n-k}{r-1}}{a_1-\sum_{S\ni v_{1,1}} {\binom{n-k}{r-|S|}}/{2}}^{k_1}\right) \cdot \ldots \cdot\left(\sum_{a_{\alpha}} \binom{\binom{n-k}{r-1}}{a_\alpha-\sum_{S\ni v_{k_\alpha,1}} {\binom{n-k}{r-|S|}}/{2}}^{k_\alpha}\right),
%&\sim2^{-k\binom{n-k}{r-1}} \cdot \left(\sum_{a=0}^{\binom{n-k}{r-1}}\binom{\binom{n-k}{r-1}}{a}^{k_1}\right) \cdot \ldots \cdot\left(\sum_{a}^{\binom{n-k}{r-1}} \binom{\binom{n-k}{r-1}}{a}^{k_\alpha}\right) = 2^{-k\binom{n-k}{r-1}} \prod_{i=1}^{\alpha} \sum_{a=0}^{\binom{n-k}{r-1}} \binom{\binom{n-k}{r-1}}{a}^{k_i},
\end{align*}
where the summations are taken over all possible values of $a_1,\dots,a_{k}$ satisfying $|a_i - \sum_{1\le s < (r+1)/2}\binom{k-1}{s-1} {\binom{n-k}{r-s}}/{2}| = O(n^{(r-1)/2}\sqrt{\log n})$. Since
\[
\sum_{a_i}  \binom{\binom{n-k}{r-1}}{a_i-\sum_{S\ni v_{i,1}} {\binom{n-k}{r-|S|}}/{2}}^{k_i}
\sim \sum_{a=0}^{\binom{n-k}{r-1}}  \binom{\binom{n-k}{r-1}}{a}^{k_i},
\]
we get
\begin{multline*}
2^{-k\binom{n-k}{r-1}} \cdot \sum_{a_1,\dots,a_{\alpha}} \binom{\binom{n-k}{r-1}}{a_1-\sum_{S\ni v_{1,1}} {\binom{n-k}{r-|S|}}/{2}}^{k_1} \cdot \ldots \cdot \binom{\binom{n-k}{r-1}}{a_\alpha-\sum_{S\ni v_{k_\alpha,1}} {\binom{n-k}{r-|S|}}/{2}}^{k_\alpha}\\
\sim 2^{-k\binom{n-k}{r-1}} \prod_{i=1}^{\alpha} \sum_{a=0}^{\binom{n-k}{r-1}} \binom{\binom{n-k}{r-1}}{a}^{k_i},
\end{multline*}
and finally by the law of total probability,
\begin{align*}
\Pr\left( \bigcap_{i=1}^\alpha \deg(v_{i,1}) = \dots = \deg(v_{i,k_i}) \right)
\sim \sum_{\beta_S^*} \Pr\left( \bigcap_{S} y_S = \beta_S \right) \cdot 2^{-k\binom{n-k}{r-1}} \prod_{i=1}^{\alpha} \sum_{a=0}^{\binom{n-k}{r-1}} \binom{\binom{n-k}{r-1}}{a}^{k_i},
\end{align*}
where $\beta_S^*$ denotes the summation over all possible values of $\beta_S$ for each $2\le |S|\le r$ satisfying $|\beta_S - {\binom{n-k}{r-|S|}}/{2}| = O(n^{(r-|S|)/2}\sqrt{\log n})$. This completes the proof of \eqref{lem:1:statement:1}, since $\sum_{\beta_S^*} \Pr( \bigcap_{S} y_S = \beta_S) \sim 1$.

Now \eqref{lem:1:statement:2} easily follows from \ref{approx:1}. Indeed, since
\[
2^{-k_i\binom{n-k}{r-1}} \sum_{a=0}^{\binom{n-k}{r-1}} \binom{\binom{n-k}{r-1}}{a}^{k_i} = O(n^{-(r-1)(k_i-1)/2}),
\]
we obtain
\[
2^{-k\binom{n-k}{r-1}} \prod_{i=1}^{\alpha} \sum_{a=0}^{\binom{n-k}{r-1}} \binom{\binom{n-k}{r-1}}{a}^{k_i}
= O\left(\prod_{i=1}^{\alpha}n^{-(r-1)(k_i-1)/2}\right)
= O\left(n^{-(r-1)(k-\alpha)/2}\right).
\]
\end{proof}

Now we use Lemma~\ref{lem:1} to show that $\E((X_2^{(5)})_k)\to\lambda^k$ as $n$ tends to infinity. Observe that $(X_2^{(5)})_k$ consists of $\binom{n}{2}\binom{n-2}{2}\cdots\binom{n-2k+2}{2}$ terms of $k$ vertex-disjoint pairs and $O(n^{2k-1})$ remaining terms. The pairs in the remaining terms are not vertex disjoint. Let $U$ be the union over all vertices in such $k$ pairs. Clearly, $k\le |U|\le 2k-1$, where the lower bound, $k$, corresponds to a $k$-cycle. Vertices in $U$ can be divided into $\alpha$ groups (each of size $k_i \ge 2$) in such a way that in each group all vertices have the same degrees. Observe that $k_1+\dots+k_{\alpha} = |U|$ and since $|U|<2k$, $1\le \alpha < |U|/2$. Due to~\eqref{lem:1:statement:2} the probability of occurrence of this degree sequence is at most
$O(n^{-2(|U|-\alpha)}) =o\of{ n^{-|U|}}$. Thus, $O(n^{|U|} \cdot  n^{-2(|U|-\alpha)}) = o(1)$ and
\begin{align*}
\E\left((X_2^{(5)})_k\right) &\sim \binom{n}{2}\binom{n-2}{2}\cdots\binom{n-2k+2}{2} 2^{-2k\binom{n-2k}{4}} \prod_{i=1}^{k} \sum_{a=0}^{\binom{n-2k}{4}} \binom{\binom{n-2k}{4}}{a}^{2}\\
&= \binom{n}{2}\binom{n-2}{2}\cdots\binom{n-2k+2}{2} 2^{-2k\binom{n-2k}{4}} \binom{2\binom{n-2k}{4}}{\binom{n-2k}{4}}^{k}\\
&\sim \left( \E(X_2^{(5)}) \right)^k \sim \lambda^k,
\end{align*}
since by~\ref{approx:3}
\[
2^{-2\binom{n-2}{4}}  \binom{2\binom{n-2}{4}}{\binom{n-2}{4}} 
\sim 2^{-2\binom{n-2k}{4}} \binom{2\binom{n-2k}{4}}{\binom{n-2k}{4}}.
\]
Hence, the method of moments implies that $X_2^{(5)} \sim \po(\lambda)$ and consequently
\begin{equation}\label{eq:poisson:1}
\Pr(\h{5}(n,1/2) \text{ is strongly 1-weighted}) = \Pr(X_2^{(5)}=0)\sim e^{-\sqrt{6/\pi}}.
\end{equation}
It remains to show that  \whp the random hypergraph $\h{5}(n,1/2)$ is strongly 2-weighted. 

Let $X_{3}^{(r)}$ count the number of triples of vertices $\{v_1,v_2,v_3\}$ in $\h{r}(n,1/2)$ such that $\deg(v_1)=\deg(v_2)=\deg(v_3)$.
Then, by Lemma~\ref{lem:1} (applied with $k=k_1=3$ and $\alpha=1$)
\begin{equation}\label{eq:X3}
\E(X_3^{(r)}) \sim \binom{n}{3} 2^{-3\binom{n-3}{r-1}} \sum_{a=0}^{\binom{n-3}{r-1}} \binom{\binom{n-3}{r-1}}{a}^3 
= \Theta(n^{4-r}),
\end{equation}
which is $o(1)$ for $r=5$. 

So far we have established the following properties of $\h{5}(n,1/2)$. By the Markov bound, $\Pr(X_2^{(5)} \ge \log n) =o(1)$ and by~\eqref{eq:X3} and the first moment method \whp no three vertices have the same degree. 

Once the edges of $\h{5}(n,1/2)$ are revealed we may assume that we have $s$ vertex-disjoint pairs $\{u_i,v_i\}$ such that $\deg(u_i)=\deg(v_i)$, where $1\le i\le s\le \log n$. (All other vertices have different degrees.) Let $S=\bigcup_i\{u_i,v_i\}$. We show that 
there is a matching saturating all $u_i$'s such that each matching edge contains one vertex from $S$ and 4 vertices from $[n]\setminus S$. One can find such matching greedily. Assume that we already chose matching edges $e_1,\dots,e_k$ with $u_i\in e_i$. The number of edges incident to $u_{k+1}$ that are not disjoint with $e_1,\dots,e_k$ or $S\setminus\{u_{k+1}\}$ is at most $O(n^3\log n)$ but $\deg(u_i)=\Omega(n^4)$. Hence, we can extend the matching by a new edge containing $u_{k+1}$.

Finally, we are ready to define a 2-weighting. First we assign to each edge weight 2. Now the colors of all vertices are even and every vertex has a different color except the $u_i$ and $v_i$. Next we replace the weights of matching edges by 1 so now all vertices have different colors.

\section{Almost all 4-uniform hypergraphs are strongly 2-weighted (but not 1-weighted)}\label{sec:thm2:ia}

First we show using the second moment method that \whp $\h{4}(n,1/2)$ is not strongly 1-weighted. By~\eqref{eq:X2}, $\E(X_2^{(4)}) = \Theta(\sqrt{n})$
and clearly it goes to infinity together with~$n$. 

Now we compute the variance.
Let $X_2^{(4)} = \sum_{\{i,j\} \subseteq \binom{[n]}{2}} X_{i,j}$, where $X_{i,j}$ is an indicator random variable such that if $X_{i,j}=1$, then $\deg(v_i)=\deg(v_j)$. Thus,
\[
(X_2^{(4)})^2 = X_2^{(4)} + \sum_{\{i,j,k\}} X_{i,j} X_{j,k} + \sum_{\{i,j\} \cap \{k,\ell\}=\emptyset} X_{i,j} X_{k,\ell}.
\]
By Lemma~\ref{lem:1} (applied with $r=4$, $k=k_1=3$, and $\alpha=1$),
\[
\Pr(X_{i,j} X_{j,k} = 1) = \Pr(\deg(v_i)=\deg(v_j)=\deg(v_k)) = O(n^{-3}).
\]
Again by  Lemma~\ref{lem:1} (applied with $r=4$, $k=4$, $k_1=k_2=2$ and $\alpha=2$)
\begin{multline*}
\Pr(X_{i,j} X_{k,\ell} = 1) = \Pr(\deg(v_i)=\deg(v_j) \text{ and } \deg(v_k)=\deg(v_\ell))\\
\sim 2^{-4\binom{n-4}{3}} \left( \sum_{a=0}^{\binom{n-4}{3}} \binom{\binom{n-4}{3}}{a}^{2} \right)^2 
=2^{-4\binom{n-4}{3}} \binom{2\binom{n-4}{3}}{\binom{n-4}{3}}^2.
\end{multline*}
Thus,
\[
\sum_{\{i,j,k\}} \E(X_{i,j} X_{j,k}) = O(n^3 \cdot n^{-3}) = O(1)
\]
and
\begin{align*}
\sum_{\{i,j\} \cap \{k,\ell\}=\emptyset} \E(X_{i,j} X_{k,\ell})
\sim \binom{n}{2}\binom{n-2}{2} 2^{-4\binom{n-4}{3}} \binom{2\binom{n-4}{3}}{\binom{n-4}{3}}^2
\sim \E(X_2^{(4)})^2,
\end{align*}
where the last asymptotic equality follows from~\ref{approx:3}. 
Consequently, $\E((X_2^{(4)})^2)\sim \E(X_2^{(4)}) + \E(X_2^{(4)})^2$ and hence $\V(X_2^{(4)})\sim \E(X_2^{(4)}) = o(\E(X_2^{(4)})^2)$. Thus, the second moment method together with Lemma~\ref{lem:2} implies that \whp there is a pair of vertices of the same degrees which is contained in a edge (so the hypergraph is not strongly 1-weighted).

Now we show that \whp $\h{4}(n,1/2)$ is strongly 2-weighted. We already observed that $\E(X_2^{(4)}) = O(\sqrt{n})$. Thus, by the Markov bound $\Pr(X_2^{(4)} \ge \sqrt{n}\log n) = o(1)$. By~\eqref{eq:X3}, $\E(X_3^{(4)})=O(1)$. Thus, again the Markov bound implies that $\Pr(X_3^{(4)} \ge \log n) = o(1)$. 
Lemma~\ref{lem:1} implies that the probability that vertices $v_1,v_2,v_3$ and $v_4$ in $\h{4}(n,1/2)$ have the same degrees is asymptotically equal to
\[
2^{-4\binom{n-4}{3}} \sum_{a=0}^{\binom{n-4}{3}} \binom{\binom{n-4}{3}}{a}^4 = O(n^{-9/2}).
\]
Hence, by the union bound (taken over all $\binom{n}{4}$ quadruples) we get that \whp there are no such four vertices. Similarly, 
(with essentially the same proof as Lemma~\ref{lem:1}) we get that \whp there are no four vertices $v_1,v_2,v_3$ and $v_4$ in $\h{4}(n,1/2)$ such that $\deg(v_1)=\deg(v_2)=\deg(v_3)=\deg(v_4)-1$.

Once the edges of $\h{4}(n,1/2)$ are revealed we may assume that there are $a$ vertex-disjoint pairs $\{u_i,v_i\}$ and $b$ triples $\{x_j,y_j,z_j\}$ such that  $\deg(u_i) = \deg(v_i)$, $\deg(x_j) = \deg(y_j) = \deg(z_j)$, and $1\le i \le a\le \sqrt{n}\log n$ and $1\le j\le b \le \log n$. (All other vertices have different degrees.)
Let $S_1 = \bigcup_i \{u_i,v_i\}$ and $S_2=\bigcup_j \{x_j,y_j,z_j\}$ and $T = [n] \setminus (S_1\cup S_2)$. We show that there are edges $e_i$, $f_j$, $f'_j$, and $g_j$ such that $u_i\in e_i$, $\{x_j\}= f_j\cap f'_j$, $y_j\in g_j$ and $e_i$, $f_j$, $f'_j$, and $g_j$  contain no other vertices from $S_1\cup S_2$ and $e_i\cap T$, $f_j\cap T$, $f'_j\cap T$, and $g_j\cap T$ are pairwise vertex-disjoint. 
Similarly as in the previous section one can find such edges greedily. First assume that we already found $e_1,\dots,e_k$ edges so that $u_i\in e_i$. The number of edges incident to $u_{k+1}$ that are not disjoint with $e_1,\dots,e_k$ or $S_1\cup S_2 \setminus\{u_{k+1}\}$ is at most $O(n^2\sqrt{n}\log n)$ but $\deg(u_i)=\Omega(n^3)$. Hence, we can extend $e_1,\dots,e_{k}$ by a new edge $e_{k+1}$ that contains~$u_{k+1}$.
Similarly we find edges $f_j$, $f'_j$, and $g_j$. 

Now we define a 2-weighting. First we assign to each edge weight 2. Now the colors of all vertices are even and only vertices $\{u_i,v_i\}\subseteq S_1$ and $\{x_j,y_j,z_j\}\subseteq S_2$ have the same color. Furthermore, there is no vertex $w$ such that for some $j$ we have $\deg(x_j) = \deg(y_j) = \deg(z_j) = \deg(w)-1$ or $\deg(x_j) = \deg(y_j) = \deg(z_j) = \deg(w)$.  Next we replace the weights of all  $e_i$, $f_j$, $f'_j$, and $g_j$ edges by 1 yielding all vertices to have different colors.

\section{Almost all 3-uniform hypergraphs are strongly 2-weighted (but not 1-weighted)}\label{sec:thm2:ib}

As in Section~\ref{sec:thm2:ia} the second moment method and Lemma~\ref{lem:2} imply that \whp there is a pair of vertices of the same degrees which is contained in a edge. Thus, \whp $\h{3}(n,1/2)$ is not strongly 1-weighted. As a matter of fact we will see later that \whp $\h{3}(n,1/2)$ is not even weakly 1-weighted.

Now we show that \whp $\h{3}(n,1/2)$ is strongly 2-weighted. Here our proof method differs from our other proofs that hypergraphs are 2-weighted. Since the expected number of pairs of vertices with the same degree is linear, we cannot simply give all edges weight 2 and then alter the weighting by flipping the weights of a few edges. We use a more complicated argument which we will outline after some lemmas.

First we need some auxiliary results. For $G=(V,E)$ and $S\subseteq V$, let $N(S)$ denote the \emph{neighborhood of $S$}, i.e., the set of all vertices in $V$ adjacent to some element of~$S$.

\begin{lemma}\label{lem:bipartite}
There exists a positive constant $\gamma$ such that with probability $1-o(1/n)$ the random bipartite graph $\g(n,n,1/2)$ contains at least $\gamma n$ edge disjoint perfect matchings.
\end{lemma}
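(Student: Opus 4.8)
The plan is to use a first-moment/concentration argument combined with a greedy iterative edge-coloring to peel off many edge-disjoint perfect matchings. First I would recall that in $\g(n,n,1/2)$ the minimum degree is concentrated: by Chernoff's bound, for a fixed vertex the degree is $n/2 + O(\sqrt{n\log n})$, and a union bound over the $2n$ vertices shows that \whp, and in fact with probability $1-o(1/n)$ (using a slightly larger deviation $\Theta(\sqrt{n\log n})$ to beat the $2n$ factor), every vertex has degree at least $n/2 - c\sqrt{n\log n} \ge (1/2 - o(1))n$. This gives us a dense, nearly-regular bipartite graph to work with.

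The main idea is then to repeatedly extract a perfect matching and delete its edges, iterating $\gamma n$ times for a small constant $\gamma$. The cleanest way to guarantee a perfect matching at each step is via Hall's condition, which for balanced bipartite graphs follows from a minimum-degree / expansion hypothesis. After removing $t \le \gamma n$ perfect matchings, every vertex has lost exactly $t$ incident edges, so its degree is still at least $(1/2 - o(1))n - \gamma n$, which stays above $n/3$ (say) provided $\gamma$ is chosen small enough. I would then verify Hall's condition $|N(S)| \ge |S|$ for all $S$ on one side: for small $S$ it follows immediately since each vertex has $\ge n/3$ neighbors, and for large $S$ one invokes an expansion property of $\g(n,n,1/2)$. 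Concretely, I would show with probability $1-o(1/n)$ that $\g(n,n,1/2)$ has no set $S$ with $|S| \le n/2$ and $|N(S)| < |S|$: the probability that a fixed $S$ of size $s$ has neighborhood contained in a fixed set of size $s-1$ is at most $\binom{n}{s}\binom{n}{s-1} 2^{-s(n-s+1)}$, and summing this over all $s$ gives $o(1/n)$ because the $2^{-s(n-s+1)}$ factor dominates the binomial factors for all $s$ in the relevant range. Since the minimum degree remains $\ge n/3$ after deleting $\gamma n$ matchings, this expansion estimate (applied to the subgraph, whose neighborhoods only shrink by a bounded amount) still yields Hall's condition, so a perfect matching exists at every one of the $\gamma n$ steps.

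The step I expect to be the main obstacle is making the expansion/Hall argument robust under the deletion of up to $\gamma n$ matchings, i.e.\ ensuring that the bad-set union bound is carried out for the \emph{surviving} graph rather than just for the original $\g(n,n,1/2)$. The honest route is to prove the expansion statement as a property of the original random graph that is inherited by every subgraph obtained by deleting at most $\gamma n$ edges at each vertex: one shows that for every $S$ with $|S| \le n/2$, the neighborhood $N(S)$ computed in $\g(n,n,1/2)$ satisfies $|N(S)| \ge |S| + \gamma n$ (a stronger, slack-including bound), so that even after each surviving vertex loses $\le \gamma n$ edges the inequality $|N(S)| \ge |S|$ persists. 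This stronger bound costs nothing in the union-bound computation because the exponential factor $2^{-\Theta(sn)}$ has enormous room to spare. Once Hall's condition is maintained throughout, the iteration produces $\gamma n$ edge-disjoint perfect matchings, and keeping all failure probabilities at $o(1/n)$ throughout (each of the finitely many structural events, and the single expansion event, holds with probability $1 - o(1/n)$) gives the claimed bound.
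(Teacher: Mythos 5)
Your overall skeleton (Chernoff concentration of degrees, greedy peeling of perfect matchings, verifying Hall's condition only for sets of size at most $n/2$ on each side) matches the paper's proof, but the step you yourself flagged as the main obstacle contains a genuine error: the inheritance principle ``if $|N_G(S)| \ge |S| + \gamma n$ in the original graph and each vertex loses at most $\gamma n$ edges, then $|N_H(S)| \ge |S|$ in the surviving graph $H$'' is false. Deleting at most $d$ edges per vertex can shrink $|N(S)|$ by far more than $d$: a vertex $w$ leaves $N(S)$ as soon as \emph{all} of its edges into $S$ are deleted, and each removed $w$ only costs $\deg_G(w,S)$ deletions, so the loss is bounded only by the number of deleted edges incident to $S$, which is up to $|S|\cdot d$. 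Concretely, if the edges between $S$ and some $W \subseteq N_G(S)$ with $|W|=|S|$ happen to form a perfect matching, then deleting a single perfect matching of the whole graph containing those edges ($d=1$) removes all of $W$ from the neighborhood of $S$. So additive slack $\gamma n$ buys nothing for sets with $|S| \gg \gamma n$, and no larger slack can repair the counting argument either, since the slack it would require, $|S|\gamma n$, exceeds $n$ throughout the range $n/3 < |S| \le n/2$ --- precisely the range not covered by your min-degree argument. As written, your Hall verification fails exactly there.

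The repair needs a structural fact about the random graph rather than about slack. One route: note that after deleting $t \le \gamma n$ matchings a vertex $w$ can leave $N(S)$ only if $\deg_G(w,S)\le t \le \gamma n$ (each matching removes exactly one edge at $w$), and show by Chernoff plus a union bound that with probability $1-o(1/n)$, for every $S$ with $n/3 \le |S| \le n/2$ only $O(1)$ vertices $w$ satisfy $\deg_G(w,S) \le \gamma n$; then $N(S)$ barely shrinks. The paper avoids this extra union bound entirely with a neater trick: for any $S$ with $|S|\ge 2$, pick $u,v \in S$ and use $N_H(S) \supseteq N_H(\{u,v\})$. For a \emph{fixed pair} the inheritance bound is valid, $|N_H(\{u,v\})| \ge |N_G(\{u,v\})| - 2i$, because each vertex removed from $N(\{u,v\})$ consumes one of the at most $2i$ deleted edges at $u$ or $v$; since $|N_G(\{u,v\})| \ge 3n/4 - O(\sqrt{n\log n})$ by Chernoff, this gives $|N_H(S)| \ge 3n/4 - o(n) - 2\gamma n \ge n/2 \ge |S|$ for every non-singleton $S$, with singletons handled by the min-degree bound just as you do. You should either adopt that pair argument or add the ``few vertices of low degree into $S$'' lemma; your expansion statement for the original graph is fine, but it does not transfer to the subgraphs in the way you claim.
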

This lemma is a weaker version of a more general result of Frieze and Krivelevich~\cite{FK} (where the authors obtained an optimal constant $\gamma=1/2-o(1)$). For the sake of completeness we show here a simple proof.

\begin{proof}
Let $G=\g(n,n,1/2)$ be a random bipartite graph on the set of vertices $A\cup B$, where $|A| = |B| = n$. Set $\gamma=1/10$. 

First observe that since for any $u,v \in A$ and $u,v \in B$ we have $|N(v)|\sim \bin(n,1/2)$ and $|N(\{u,v\})|\sim\bin(n,3/4)$, the Chernoff bound yields that with probability $1-o(1/n)$ for any $u,v \in A$ and $u,v \in B$,
\[
|N(v)| \ge n/2 - O(\sqrt{n\log n}) \quad \text{ and }\quad |N(\{u,v\})| \ge 3n/4 - O(\sqrt{n\log n}).
\]

Assume that we already found a collection $\mathcal{M}_i = \{M_1,\dots,M_{i}\}$ of perfect matchings in $G$. We show that $G_{i+1} = G\setminus \mathcal{M}_i$ also contains a perfect matching $M_{i+1}$. It suffices to show that if $i<\gamma n$, then the Hall condition holds, i.e., 
\begin{equation}\label{eq:hallS}
\text{if } S\subseteq A \text{ and } |S| \le n/2, \text{ then } |N_{G_{i+1}}(S)| \ge |S|,
\end{equation}
and
\begin{equation}\label{eq:hallT}
\text{if } T\subseteq B \text{ and } |T| \le n/2, \text{ then } |N_{G_{i+1}}(T)| \ge |T|.
\end{equation}
Indeed, if $S=\{v\}$, then 
\[
|N_{G_{i+1}}(S)| = |N_{G_{i+1}}(v)| = |N_G(v)| - i \ge n/2 - O(\sqrt{n\log n}) - \gamma n \ge 1 = |S|.
\] 
Therefore, we may assume that $2 \le |S| \le n/2$. Let $\{u,v\} \subseteq S$. Then, 
\[
|N_{G_{i+1}}(S)| \ge |N_{G_{i+1}}(\{u,v\})| \ge |N_{G}(\{u,v\})| -2i
\ge 3n/4 - O(\sqrt{n\log n}) -2\gamma n \ge n/2 \ge |S|
\]
and \eqref{eq:hallS} holds. Similarly, \eqref{eq:hallT} holds, too.
\end{proof}

\begin{lemma}\label{lem:hyper_matchings}
Let $\hh$ be a 3-partite 3-uniform random hypergraph on vertex set $V_1\cup V_2\cup V_3$ with $|V_1| = |V_2| = |V_3| = n$ and probability $1/2$. Then, there exists a positive constant~$\gamma$ such that \whp $\hh$ contains at least $\gamma n^2$ edge disjoint perfect matchings. 
\end{lemma}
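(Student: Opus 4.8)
The plan is to mirror the proof of Lemma~\ref{lem:bipartite}: I would construct the $\gamma n^2$ perfect matchings greedily, deleting each one as it is found, and show that as long as fewer than $\gamma n^2$ matchings have been removed the remaining hypergraph still contains a perfect matching. The only genuinely new ingredient compared with the bipartite case is a usable criterion guaranteeing a perfect matching in a dense, pseudorandom $3$-partite $3$-uniform hypergraph, together with control of how the relevant parameters decay under repeated deletion. First I would record the deterministic pseudorandomness facts: by Chernoff's bound and a union bound, \whp every vertex of $\hh$ has degree $(1/2+o(1))n^2$ and every pair of vertices in two different parts has codegree $(1/2+o(1))n$; since each removed matching deletes exactly one edge at every vertex, after removing $i<\gamma n^2$ matchings every degree is still $(1/2-o(1))n^2-i$.

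To find a single perfect matching in the current hypergraph I would reduce the $3$-dimensional matching to two rounds of bipartite matching. In the first round, form the bipartite graph on $V_1\cup V_2$ joining $a$ to $b$ whenever the codegree $|\{c:\{a,b,c\} \text{ is an edge}\}|$ exceeds a threshold $\theta=\Theta(n)$; a perfect matching $\sigma$ here is produced exactly as in Lemma~\ref{lem:bipartite}, by verifying Hall's condition from the codegree lower bounds. In the second round, form the bipartite graph on $V_1\cup V_3$ joining $a$ to $c$ whenever $\{a,\sigma(a),c\}$ is an edge; a perfect matching $\rho$ of this graph, again obtained from Hall's condition, yields the perfect matching $\{\{a,\sigma(a),\rho(a)\}:a\in V_1\}$.

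The main obstacle is the second round. Each $a$ has at least $\theta$ candidate vertices $c$, but the Hall condition on the $V_3$-side requires every $c$ to be a legal third vertex for many of the pairs $(a,\sigma(a))$, and this quantity depends on $\sigma$. One cannot union bound over $\sigma$, since there are $\sim n!$ perfect matchings while the per-$\sigma$ failure probability is only $e^{-\Omega(n)}$. I would break this correlation by two-round exposure: write $\hh=\hh_A\cup\hh_B$ as the union of two independent random $3$-partite hypergraphs whose densities combine to $1/2$, use $\hh_A$ to determine $\sigma$ in the first round, and use the fresh, independent randomness of $\hh_B$ in the second round. Since $\sigma$ is then a fixed function of $\hh_A$, for each of the $n$ vertices $c$ the count $|\{a:\{a,\sigma(a),c\}\in\hh_B\}|$ is a genuine $\bin(n,\Theta(1))$ variable, so Chernoff's bound together with a union bound over the $n$ choices of $c$ (and no union over $\sigma$) gives the required $V_3$-side expansion, hence Hall's condition.

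Finally I would fit this into the greedy iteration, and here lies the remaining difficulty: over $\Theta(n^2)$ deletions the codegrees must stay above $\theta$. The total codegree removed at a fixed vertex is one per matching, so the \emph{average} codegree drops only from $\sim n/2$ to $\sim(1/2-\gamma)n$, but individual codegrees could be depleted unless the deletions are spread out. I would either keep the construction balanced, forcing each pair $(a,b)$ to be used by only $O(\gamma n)$ of the matchings, which is consistent with the budget since $\gamma<1/2$, or, more cleanly, repeat the two-round exposure inside the iteration so that the second-round randomness remains independent of the matchings already removed. Taking $\gamma$ a small enough constant (as in Lemma~\ref{lem:bipartite}, where $\gamma=1/10$) then keeps every parameter comfortably above its threshold throughout; one could alternatively invoke a known codegree criterion for perfect matchings to shorten this step. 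I expect the delicate bookkeeping of these pseudorandom invariants across $\Theta(n^2)$ rounds to be the hardest part of the write-up.
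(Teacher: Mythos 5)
Your reduction of a single perfect matching to two rounds of bipartite Hall arguments is sound, though over-engineered: \whp every pair in $V_1\times V_2$ has codegree $(1/2+o(1))n$, so your first-round graph is simply complete, and $\sigma$ may as well be chosen deterministically, which already removes any need for two-round exposure when finding \emph{one} matching. The genuine gap is the iteration, which is the entire content of the lemma, and neither of the fixes you offer closes it. Repeating the two-round exposure inside the iteration cannot supply fresh randomness: the matchings already deleted are functions of $\hh_B$ as well as $\hh_A$, so from the second round on the unrevealed part of $\hh_B$ is conditioned on them; the only way to have genuinely independent randomness in each of $\Theta(n^2)$ rounds would be to pre-split $\hh$ into $\Theta(n^2)$ independent layers of density $\Theta(n^{-2})$ (or even $\Theta(n)$ layers of density $\Theta(1/n)$, with codegrees $O(1)$), which are far too sparse for any Hall-type argument. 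The balanced option is the right instinct but is not a proof: you do not say how the balance is enforced, and once the deletions are adaptive you need a deterministic resilience statement --- that the random hypergraph minus an \emph{arbitrary} collection of fewer than $\gamma n^2$ edge-disjoint perfect matchings still has a perfect matching --- which does not follow from the concentration facts you list. In particular, codegree-threshold criteria have zero slack here: random codegrees concentrate at $n/2$, which is already the extremal codegree threshold, so they cannot absorb even the random fluctuations, let alone the deletions.

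The missing idea, and the paper's actual proof, is to choose all the $V_1$--$V_2$ pairings deterministically and in advance, as the $n$ perfect matchings $M_1,\dots,M_n$ of a proper $1$-factorization of the complete bipartite graph on $V_1\cup V_2$. The triples $\{a,b,c\}$ with $\{a,b\}\in M_i$ and $c\in V_3$ then partition the complete $3$-partite triple system into $n$ edge-disjoint slices, and the restriction of $\hh$ to the $i$-th slice is exactly a random bipartite graph $\g(n,n,1/2)$ on the parts $M_i$ and $V_3$, these $n$ bipartite graphs being mutually independent. Lemma~\ref{lem:bipartite} --- whose failure probability $o(1/n)$ exists precisely so that one can union bound over the $n$ slices --- gives $\gamma n$ edge-disjoint perfect matchings within each slice, and each such matching is a perfect matching of $\hh$; matchings from different slices are edge-disjoint for free because the slices are. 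This yields $\gamma n^2$ edge-disjoint perfect matchings with no greedy process, no deletion, and no conditioning issues, i.e., it eliminates exactly the bookkeeping that you correctly identify as the hardest, and in your proposal unresolved, part.
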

\begin{proof}
Consider first  a complete bipartite graph $F$ on $V_1\cup V_2$. Then, since $F$ is $n$-regular bipartite graph, it can be decomposed into $n$ edge disjoint perfect matchings, say $F = M_1\cup\dots\cup M_n$. Let $H_i$ be a 3-partite 3-uniform hypergraph on $V_1\cup V_2 \cup V_3$ and the set of edges
\[
E_i = \{ e \cup v : e\in M_i \text{ and } v\in V_3 \}.
\]
Thus, the complete 3-partite 3-uniform hypergraph on $V_1\cup V_2 \cup V_3$ is the edge disjoint union of $H_i$'s over all $1\le i \le n$. We generate a random 3-partite 3-uniform hypergraph by revealing edges in each $H_i$. It suffices to show that the random hypergraph induced by $H_i$ contains with probability $1-o(1/n)$ at least $\gamma n$ edge disjoint perfect matchings, where $\gamma$ is a constant from Lemma~\ref{lem:bipartite}. The latter is obviously true since the random graph induced by $H_i$ can be viewed as $G=\g(n,n,1/2)$ on $M \cup V_3$ (since $|M| = |V_3| = n$) and any perfect matching in $G$ yields a perfect matching in the 3-partite hypergraph.
\end{proof}

Now we are ready to show that \whp $\h{3}(n,1/2)$ is strongly 2-weighted.
\begin{proof}

We start with an outline. First we consider an equipartition $V_1 \cup V_2 \cup V_3$ of the vertex set $V=[n]$.  Then we reveal the random hypergraph $\h{3}(n,1/2)$. Due to Lemma~\ref{lem:hyper_matchings} (assuming that $n$ is divisible by 3) there is \whp a family $\mathcal{M}$ of disjoint perfect matchings such that $\mathcal{M} = \lfloor \gamma  n^2 \rfloor$ and each edge in each matching has one vertex in each part of the partition. We then randomly label the vertices in $V_1$ with real number labels in $[0, 1/9]$, the vertices in $V_2$ with labels in $[1/9,  2/9]$, and the vertices in $V_3$ with labels in $[2/9,  1/3]$. Let $\l(v)$ denote the label of vertex $v$. We then randomly weight the edges, where any non-matching edge $\{u,v,w\} \in \binom{[n]}{3}$ gets weight $2$ with probability $\l(u)+\l(v)+\l(w)$, and any matching edge gets weight $2$ with probability $\frac 12$. 
We will show that 
\begin{align*}
\E(|\{c: \; 0 \le c \le n^2&, \; \exists\, x, y \mbox{ such that } |c(x)-c| \le 2 \text{ and } |c(y)-c| \le 2\} |)\\
& \le \sum_{c=0}^{n^2}\E(\left|\left\{ \{x, y\}:  |c(x)-c| \le 2 \text{ and } |c(y)-c| \le 2 \right\}\right|) = O(\log^4 n).
\end{align*}
We will call pairs $\{x, y\}$ such that  $ |c(x)-c| \le 2$ and  $|c(y)-c| \le 2$ for some $c$ \emph{dangerous} and pairs such that $c(x) = c(y)$ \emph{bad}.  We will also show that 
\begin{align*}
\E(|\{c: \; 0 \le c \le n^2, \; &\exists\, x, y, z \mbox{ such that } |c(x)-c|\le 2, |c(y)-c|\le 2, |c(z)-c| \le 2\} |)
\end{align*}
is $o(1)$.
Finally, we will alter the weights on certain edges so that we kill all bad pairs (i.e. after changing the weights no two vertices will have the same color) which will complete the proof.

In this paragraph and the next we are revealing only the random hypergraph $\hh = \h{3}(n,1/2)$, and all probabilities are with respect to this distribution. The degrees of vertices are concentrated by the Chernoff bound. More specifically, for any vertex $v$, and any distinct parts $V_i$ and  $V_j$ there are $m={n^2}/{9} +O(n)$ triples containing $v$ together with one additional vertex from $V_i$ and one from $V_j$ (the $O(n)$ term is to account for the possibility that $v$ is in one of $V_i$ or $V_j$). Each of these triples has probability $1/2$ of being an edge of $\hh$, independently, and so if we let $d_{i,j}(v)$ denote the number of these edges present in $\hh$, the Chernoff bound tells us that 
\[
\Pr(|d_{i,j}(v) - m/2| > n \log n) \le 2 \exp \left( -\frac{(n \log n)^2 }{3m/2} \right) =  \exp \left( - \Omega( \log^2 n )\right)
\]
and so the union bound gives us that the probability there exists $v, i, j$ such that $|d_{i,j}(v) - m/2| > n\log n$ is at most  $3n \cdot \exp \left\{ - \Omega( \log^2 n )\right\}= o(1)$.
Hence, \whp for every vertex $v$ and distinct parts $V_i, V_j$ there are ${n^2}/{18} + O(n \log n)$ edges containing $v$ and additionally one vertex from each of $V_i$, $V_j$. Similarly (using the Chernoff and union bounds), \whp for any vertex $v$ and part $V_i$ there are ${n^2}/{36} + O(n \log n)$ edges containing $v$ and additionally two vertices from $V_i$. 

Until the last paragraph of this proof will assume that $n$ is divisible by 3.
Due to Lemma~\ref{lem:hyper_matchings} there is \whp a family $\mathcal{M}$ of disjoint perfect matchings where $|\mathcal{M}|=\lfloor \gamma n^2\rfloor$ and each edge in each matching goes between the partition (i.e. has one vertex in each $V_i$). 

Henceforth we assume that all the edges of $\hh$ were revealed obtaining a hypergraph~$H$ that has the degree properties mentioned in the previous paragraphs, and the family of matchings $\mc{M}$. 

Next we reveal the vertex labels and the non-matching edge weights. The weight of edge $\{x, y, z\}$ is distributed as 
\[ w(x,y,z) = \begin{cases} 
      1 & \mbox{ with probability }   1 - \l(x)-\l(y)-\l(z) \\
      2 & \mbox{ with probability }  \l(x)+\l(y)+\l(z) 
   \end{cases}
\]
so 
\[
\E(w(x,y,z)\;|\;\l(x), \l(y), \l(z)) =  1 + \l(x)+\l(y)+\l(z)
\]
and therefore if by $c_{H \sm \mc{M}}(x)$ we denote the sum of the weights of non-matching edges containing $x$, and say we are given only the label $\l(x)$, and the hypergraph $H$ with family of matchings $\mc{M}$, and $x \in V_i$ then we have
\begin{align*}
\E(c_{H \sm \mc{M}}(x)\; |\; \l(x), H) &= \sum_{\{x, v, w \}\in E(H) \sm \mc{M}} \E(w(x,v,w)\;|\;\l(x))\\
& = \sum_{\{x, v, w \}\in E(H)\sm \mc{M}} (1 + \l(x)+\E(\l(v))+\E(\l(w)))\\
&= \of{1+ \l(x)} deg_{H \sm \mc{M}}(x) + \sum_{\{x, v, w \}\in E(H)\sm \mc{M}} (\E(\l(v))+\E(\l(w)) )\\
&=  \l(x) \of{\frac 14 - \gamma}n^2   + \Theta(n^2),
\end{align*}
where on the last line we have used our estimate of $\deg_{H \sm \mc{M}}(x)$. Note that the $\Theta(n^2)$ term may depend on $i$ (recall $x \in V_i$) but does not otherwise depend on $x$ or $\l(x)$ (by the fact that the degree of each vertex $v$ into sets $V_i$, $V_j$ etc. is concentrated).

Now since $c_{H \sm \mc{M}}(x) =  \sum_{\{x, v, w \}\in E(H) \sm \mc{M}} w(x, v, w)$, and the random variables $w(x,v,w)$ are independent (given the vertex labels), we can apply Bernstein's inequality. For our application we use $m =deg_{H \sm \mc{M}}(x)$. We can easily use $C=2$, and put $\V(w(x, v, w)) \le \E(w(x, v, w)^2) \le 4$. We will set $\gamma = n \log n$. Then we get 
\begin{align*}
\Pr \left (\left|c_{H \sm \mc{M}}(x)  -\E\left(c_{H \sm \mc{M}}(x)\;|\;\l(x), H\right) \right| > n \log n \right ) 
&\leq 2\exp \left ( -\frac{\tfrac{1}{2} n^2 \log^2 n}{4\deg_{H \sm \mc{M}}(x)+\tfrac{1}{3} \cdot  2 n \log n} \right )\\
& = \exp\left\{ -\Omega \left(\log^2 n \right) \right\}
\end{align*}
and so by the union bound, \whp for each vertex $x$, $c_{H \sm \mc{M}}(x)$ is within $n \log n$ of its expectation. 

Now for any fixed integer $a$ and fixed vertex $x$, 
\[
Pr\left(\left|\l(x) \of{\frac 14 - \gamma}n^2 - a\right|\le  n \log n\right) = O(\log n /n),
\]
since this is the probability that $\l(x)$ falls within an interval of length $O(\log n /n)$. Since the vertex labels are independent, the probability that there are $\log^2 n$ many vertices $x$ with $\left|\l(x) \of{\frac 14 - \gamma}n^2 - a\right|\le  n \log n$ is at most 

\begin{align*}
\binom{n}{\log^2 n} \cdot \of{O(\log n /n)}^{\log^2 n} &\le \bfrac{ne}{\log^2 n}^{\log^2 n}\of{O(\log n /n)}^{\log^2 n}\\
& = \of{O\of{\frac{1}{\log n}}}^{\log^2 n} =o(n^2).
\end{align*}
Therefore, by the union bound over integers $c$ from $0$ to $n^2$, we have that for all such $c$  the number of vertices $x$ with $|c_{H \sm \mc{M}}(x) - c| \le n \log n$ is at most $ \log^2 n$. Henceforth we assume that the labels and non-matching edge weights have been revealed and they satisfy this property. 

Now we reveal the matching edge weights. These are $1$ or $2$ with probability $1/2$. We will denote by $c_\mc{M}(x)$ the sum of the weights of matching edges adjacent to $x$; note that $c(x) = c_\mc{M}(x) + c_{H \sm \mc{M}}(x)$. The key fact we will use here is that since $|\mc{M}| = \lfloor \gamma n^2\rfloor$, $c_\mc{M}(x)$ is not likely to be any one particular value. Indeed, since $c_\mc{M}(x)-|\mc{M}| \sim \bin(|\mc{M}|, 1/2)$,
the mode of $c_\mc{M}(x)$ occurs with probability 
\[
\binom{|\mc{M}|}{\lfloor |\mc{M}|/2 \rfloor} 2^{-|\mc{M}|} \sim \frac{\sqrt{2/\gamma \pi}}{ n }= O(1/n),
\]
where the latter follows from~\ref{approx:3}.
Also, \whp for all $x$ we have $|c_\mc{M}(x) - \frac 32 \gamma n^2| \le n \log n$. Therefore, 
\begin{multline*}
\sum_{c=0}^{n^2}\E\left(\left|\left\{ \{x, y\}: |c(x)-c|\le 2 \text{ and }  |c(y)-c| \le 2  \right\}\right|\right) \\
= O\of{n^2 \cdot (\log^2 n )^2 \cdot (1/n)^2} = O(\log^4 n),
\end{multline*}
where on the last line we get the $(1/n)^2$ in the big-$O$ term since the probability that $c(x)=c$ is $O(1/n)$, and conditioning on that event, the event that $c(y)=c$ still has probability $O(1/n)$ (since revealing $c(x)$ only reveals the weights of $O(n)$ of the $\lfloor \gamma n^2\rfloor$ matching edges containing $y$, $c(y)$ still essentially has the same distribution as it did before conditioning) and similarly for the conditional probability that $c(z)=c$. Similarly,
\begin{multline*}
\E(|\{c: \; 0 \le c \le n^2, \; \exists\, x, y, z \mbox{ such that } |c(x)-c|\le 2,  |c(y)-c|\le 2, |c(z)-c| \le 2\} \\
\le O\of{n^2 \cdot (\log^2 n )^3 \cdot (1/n)^3} = o(1).
\end{multline*}

Thus, by the Markov bound \whp there are at most $\log^5 n$ dangerous pairs, and there is no triple $x, y, z$ such that  $|c(x)-c|\le 2$, $|c(y)-c|\le 2$, and $|c(z)-c| \le 2$ for any $c$. Suppose there are $b \le \log^5 n$ bad pairs and let  $\{\{u_i,v_i\}: 1 \le i \le b\}$ be the set of  bad pairs. For each $i$ we will choose an edge $e_i$ such that $u_i\in e_i$ and $e_i$ does not intersect any other dangerous pair and all edges $e_i$ are vertex-disjoint. As in the previous sections this is easy since each vertex has degree $\Theta(n^2)$, the number of edges containing any two vertices is $O(n)$, and there are at most $\log^5 n$ dangerous pairs. Now for each bad pair $\{u_i,v_i\}$ we flip the weight of $e_i$ (from 1 to 2 or from 2 to 1). Thus $u_i$ no longer has the same color as $v_i$. Also note that now no two vertices in the entire hypergraph can have the same color, since the new color of any vertex can only differ from its old color by at most 1, the only vertices in dangerous pairs whose colors are changed are the vertices that are in bad pairs (and they are only changed to make them not bad anymore), and there is no triple $x, y, z$ such that $|c(x)-c|\le 2$,  $|c(y)-c|\le 2$, and $|c(z)-c| \le 2$ for any $c$. This completes the proof assuming that 3 divides $n$.

If $n$ is not divisible by 3, then we can still use an equipartition $V_1$, $V_2$, $V_3$, and a family of (not quite perfect) matchings $\mc{M}$ where $|\mc{M}| = \gamma n^2$, each $M \in \mc{M}$ has size $\lfloor n/3 \rfloor$, and each vertex is covered by $\gamma n^2 - O(n)$ matchings in $\mc{M}$. It is relatively straightforward to check that with this small change, the same proof still works.
\end{proof}

\section{For any $r\ge 4$ almost all $r$-uniform hypergraphs are weakly 1-weighted}\label{sec:thm3:i}

Now let $X_{4}^{(r)}$ counts the number of quadruples of vertices $\{v_1,v_2,v_3,v_4\}$ in $\h{r}(n,1/2)$ such that $\deg(v_1)=\deg(v_2)=\deg(v_3)=\deg(v_4)$.
Then, by Lemma~\ref{lem:1} (applied with $k=k_1=4$ and $\alpha=1$)
\begin{equation}
\E(X_4^{(r)}) \sim \binom{n}{4} 2^{-4\binom{n-4}{r-1}} \sum_{a=0}^{\binom{n-4}{r-1}} \binom{\binom{n-4}{r-1}}{a}^4
= \Theta(n^{4-3(r-1)/2}),
\end{equation}
which is $o(1)$ for $r\ge 4$. 

\section{Almost all 3-uniform hypergraphs are weakly 2-weighted (but not 1-weighted)}\label{sec:thm3:ii}

By Theorem~\ref{thm:main}\ref{thm:main:a} \whp $\h{r}(n,1/2)$ is strongly 2-weighted which obviously implies that it is also weakly 2-weighted. Hence, we only need to show that \whp $\h{r}(n,1/2)$ is not weakly 1-weighted. We do it by applying the second moment method as in Section~\ref{sec:thm2:ia}.

Let $X_{3}$ counts the number of edges $\{v_1,v_2,v_3\}$ in $\h{3}(n,1/2)$ such that $\deg(v_1)=\deg(v_2)=\deg(v_3)$. By Lemma~\ref{lem:1} we get
\[
\Pr(\deg(v_1) = \deg(v_2) = \deg(v_3), \{v_1, v_2, v_3\} \in E) 
\sim \frac{1}{2}\sum_{a=0}^{\binom{n-3}{2}} \binom{\binom{n-3}{2}}{a}^3 2^{-3\binom{n-3}{2}}
\sim \frac{2}{\pi \sqrt{3} n^2 },
\]
where the latter follows from~\ref{approx:1} (applied with $m=\binom{n-3}{2}$ and $k=3$). Thus,
\[
\E(X_3) \sim \binom{n}{3} \frac{2}{\pi \sqrt{3} n^2 },
\]
which goes to infinity together with~$n$.

We show that $\E(X_3^2) \sim (\E(X_3))^2$. For $e\in \binom{[n]}{3}$,
let $X_e$ be an indicator random variable which is equal to 1 if all three vertices in $e$ have the same degree.
Thus,
\[
X_3^2 = X_3 + \sum_{|e\cap f| = 0} X_{e} X_{f} + \sum_{|e\cap f| = 1} X_{e} X_{f} + \sum_{|e\cap f| = 2} X_{e} X_{f}.
\]

If $|e\cap f|=2$, then we may assume that $e\cup f = \{v_1,\dots,v_4\}$ and by Lemma~\ref{lem:1}
\begin{align*}
\Pr(X_eX_f=1) &= \Pr(\deg(v_1)=\dots=\deg(v_4) \mbox{ and } e,f\in E)\\
&\sim \frac{1}{4}\sum_{a} \binom{\binom{n-4}{2}}{a}^4 2^{-4\binom{n-4}{2}}
=O\left( \frac{1}{n^3} \right)
\end{align*}
and consequently,
\[
\sum_{|e\cap f| = 2} \E(X_{e} X_{f}) = O\left( n^4 \cdot \frac{1}{n^3} \right)=  O(n) = o((\E(X_3))^2).
\]
Similarly,
\[
\sum_{|e\cap f| = 1} \E(X_{e} X_{f}) = O\left( n^5 \cdot \frac{1}{n^4} \right)=  O(n) = o((\E(X_3))^2).
\]
Finally, if $|e\cap f|=0$, then Lemma~\ref{lem:1} (applied with $r=3$, $k=6$, $k_1=k_2=3$, and $\alpha=2$) implies that
\[
\sum_{|e\cap f| = 0} \E(X_{e} X_{f}) \sim \frac{1}{4}\binom{n}{3} \binom{n-3}{3} 2^{-5\binom{n-6}{2}} \left( \sum_{a} \binom{\binom{n-6}{2}}{a}^3 \right)^2 \sim (\E(X_3))^2.
\]
Thus, we are done by the second moment method.

%Now we compute the variance.
%Let $X_3 = \sum_{\{i,j\} \subseteq \binom{[n]}{2}} X_{i,j,k}$, where $X_{i,j,k}$ is an indicator random variable such that if $X_{i,j,k}=1$, then $\{i,j,k\}$ is an edge and  $\deg(v_i)=\deg(v_j)=\deg(v_k)$. Thus,
%\[
%(X_2^{(4)})^2 = X_2^{(4)} + \sum_{\{i,j,k\}} X_{i,j} X_{j,k} + \sum_{\{i,j\} \cap \{k,\ell\}=\emptyset} X_{i,j} X_{k,\ell}.
%\]
%By Lemma~\ref{lem:1} (applied with $r=4$, $k=k_1=3$, and $\alpha=1$),
%\[
%\Pr(X_{i,j} X_{j,k} = 1) = \Pr(\deg(v_i)=\deg(v_j)=\deg(v_k)) = O(n^{-3}).
%\]
%Again by  Lemma~\ref{lem:1} (applied with $r=4$, $k=4$, $k_1=k_2=2$ and $\alpha=2$)
%\begin{multline*}
%\Pr(X_{i,j} X_{k,\ell} = 1) = \Pr(\deg(v_i)=\deg(v_j) \text{ and } \deg(v_k)=\deg(v_\ell))\\
%\sim 2^{-4\binom{n-4}{3}} \left( \sum_{a=0}^{\binom{n-4}{3}} \binom{\binom{n-4}{3}}{a}^{2} \right)^2 
%=2^{-4\binom{n-4}{3}} \binom{2\binom{n-4}{3}}{\binom{n-4}{3}}^2.
%\end{multline*}
%Thus,
%\[
%\sum_{\{i,j,k\}} \E(X_{i,j} X_{j,k}) = O(n^3 \cdot n^{-3}) = O(1)
%\]
%and
%\begin{align*}
%\sum_{\{i,j\} \cap \{k,\ell\}=\emptyset} \E(X_{i,j} X_{k,\ell})
%\sim \binom{n}{2}\binom{n-2}{2} 2^{-4\binom{n-4}{3}} \binom{2\binom{n-4}{3}}{\binom{n-4}{3}}^2
%\sim \E(X_2^{(4)})^2,
%\end{align*}
%where the last asymptotic equality follows from~\ref{approx:3}. 
%Consequently, $\E((X_2^{(4)})^2)\sim \E(X_2^{(4)}) + \E(X_2^{(4)})^2$ and hence $\V(X_2^{(4)})\sim \E(X_2^{(4)}) = o(\E(X_2^{(4)})^2)$. Thus, the second moment method together with Lemma~\ref{lem:2} implies that \whp there is a pair of vertices of the same degrees which is contained in a edge (so the hypergraph is not strongly 1-weighted).

\section{NP-completeness of $\weighted{r}$}

Let $r\ge 3$. First note that $\weighted{r}$ is clearly in NP, since for a given hypergraph $H=(V,E)$ and $\omega:E\to\{1,2\}$ one can verify in polynomial time whether a vertex-coloring induced by $\omega$ is strong.

It is known that for graphs $\weighted{2}$ is NP-complete as it was proven independently by Dehghan, Sadeghi and Ahadi~\cite{NPcomplete2}, and Dudek and Wajc~\cite{NPcomplete}. 
 
In order to prove that $\weighted{r}$ is NP-complete, we show a reduction from $\weighted{2}$ to $\weighted{r}$. To this end, we define a polynomial time reduction~$h$, such that $G\in \weighted{2}$ if and only if $h(G)\in\weighted{r}$.

We will need an auxiliary gadget. First we define an $r$-partite $r$-uniform hypergraph $T=(V,E)$. Let $V=V_1\cup\dots\cup V_r$, where $|V_i|=i$ for each $1\le i\le r$, and $E$ be the set of edges consisting of all possible edges containing exactly one vertex from each~$V_i$. Observe that if $v\in V_i$, then $\deg(v) = r! / i$. Consequently $T$ is strongly 1-weighted (and so nice). We refer to the unique vertex $v\in V_1$ as a \emph{root}.
Let $T(k)$ be a union of $k$ copies of $T$ with the same root which we refer as the root of $T(k)$ (any two copies only share its roots). Clearly, 
$T(k)$ is still strongly 1-weighted and if $v$ is its root, then $\deg_{T(k)}(v) = k\cdot r!$.

Now we are ready to show a reduction from $\weighted{2}$ to $\weighted{r}$, $h$, such that $G\in \weighted{2}$ if and only if $h(G)\in\weighted{r}$.
Let $G=(V,E)$ be a graph of order~$n$. We construct a nice $r$-uniform hypergraph $h(G)=(W,F)$ as follows. For each edge $e=\{x,y\}$ in $G$ we define an edge $\{x,y,v_1,\dots v_{r-2}\}$ in $h(G)$, where all $v_i$'s a different for all $e$ and $i$. Now to each $v_i$ (for $1\le i\le r-2$) we attach a copy of $T(2in)$ on the new set of vertices with $v_i$ as its root. 

Let us assume that $G=(V,E )\in \weighted{2}$. Thus, there is $\omega_G:E\to\{1,2\}$ such that the vertex-coloring $c_G$ induced by $\omega_G$ is proper. Let $H=h(G)=(W,F)$. We define a weight function $\omega_H:F\to\{1,2\}$ as follows. To each edge $\{x,y,v_1,\dots v_{r-2}\}\in F$ derived from $\{x,y\} \in E$, we assign $\omega_G(\{x,y\})$; otherwise we assign weight~1. Now we claim that the vertex-coloring $c_H:W\to\mathbb{N}$ induced by~$\omega_H$ is strong. By construction, any edge of~$H$ which is contained in a copy of $T(2in)$ is rainbow. We show that this also holds for $\{x,y,v_1,\dots v_{r-2}\}\in F$ derived from $G$. Observe that 
\[
c_H(v_i) = c_G(v_i) + 2in \cdot r!,
\]
and consequently, $2n < c_H(v_1) < \dots < c_H(v_{r-2})$. Moreover, since $c_G(x)\neq c_G(y) \le 2(n-1)$, we get that $\{x,y,v_1,\dots v_{r-2}\}$ is rainbow. Hence, $h(G)\in \weighted{r}$.

Now suppose that $G=(V,E )\notin \weighted{2}$. We show that $H=h(G)=(W,F)\notin \weighted{r}$. Assume not. That means there exists a weight function $\omega_H:W\to\{1,2\}$ such that the vertex-coloring $c_H$ induced by $\omega_H$ is strong. Now let $\omega_G:V\to\{1,2\}$ be such that $\omega_G(\{x,y\}) = \omega_H(\{x,y,v_1,\dots v_{r-2}\})$. Since $c_G(x) = c_H(x)$ and $c_H$ is strong, we conclude that $c_G$ is proper, a contradiction.

%\bibliographystyle{amsplain}
%\bibliography{refs}

\providecommand{\bysame}{\leavevmode\hbox to3em{\hrulefill}\thinspace}
\providecommand{\MR}{\relax\ifhmode\unskip\space\fi MR }
% \MRhref is called by the amsart/book/proc definition of \MR.
\providecommand{\MRhref}[2]{%
  \href{http://www.ams.org/mathscinet-getitem?mr=#1}{#2}
}
\providecommand{\href}[2]{#2}

\end{document}